\crefname{lstlisting}{listing}{listings}
\Crefname{lstlisting}{Listing}{Listings}
\crefname{equation}{equation}{equations}
\newtheorem{theorem}{Theorem}[section]
\newtheorem{lemma}[theorem]{Lemma}
\newtheorem{definition}[theorem]{Definition}
\newtheorem{observation}{Observation}
\newtheorem{example}[theorem]{Example}
\newcommand*{\specname}[1]{\ensuremath{\mathbf{#1}}}
\newcommand*{\symgp}[1]{\ensuremath{\mathfrak{S}_{#1}}}
\newcommand*{\catname}[2][]{\ensuremath{\boldsymbol{\mathsf{#2}}}_{#1}}
\newcommand*{\fieldname}[1]{\ensuremath{\mathbf{#1}}}
\newcommand*{\specset}[1][]{\ensuremath{\specname{E}}_{#1}}
\newcommand*{\specsing}[1][X]{\ensuremath{\specname{#1}}}
\newcommand*{\speclin}[1][]{\ensuremath{\specname{Lin}_{#1}}}
\newcommand*{\speccyc}[1][]{\ensuremath{\specname{Cyc}_{#1}}}
\newcommand*{\spectree}[1][]{\ensuremath{\specname{Tree}_{#1}}}
\newcommand*{\specrtree}[1][]{\ensuremath{\specname{RTree}_{#1}}}
\newcommand*{\specrblt}[1][]{\ensuremath{\specname{RBLT}_{#1}}}
\newcommand*{\specbintree}[1][]{\ensuremath{\specname{BT}}_{#1}}
\newcommand*{\specbintreer}[1][]{\ensuremath{\specname{BTR}}_{#1}}
\newcommand*{\specgraph}[1][]{\ensuremath{\specname{Graph}_{#1}}}
\newcommand*{\specgraphc}[1][]{\ensuremath{\specname{GraphC}_{#1}}}
\newcommand*{\specgraphbc}[1][]{\ensuremath{\specname{GraphBC}_{#1}}}
\newcommand*{\specdigraph}[1][]{\ensuremath{\specname{Digraph}_{#1}}}
\newcommand*{\specdigraphc}[1][]{\ensuremath{\specname{DigraphC}_{#1}}}
\newcommand*{\specpoly}[1][]{\ensuremath{\specname{Poly}_{#1}}}
\newcommand*{\specpath}[1][]{\ensuremath{\specname{Path}_{#1}}}
\newcommand*{\specsubs}[1][]{\ensuremath{\specname{Subs}}_{#1}}
\newcommand*{\pointed}[2][]{\ensuremath{{#2}^{\bullet {#1}}}}
\newcommand*{\ci}[2][]{ \ensuremath{ Z_{#2}^{#1} } }
\newcommand*{\civars}[3][]{ \ensuremath{ Z_{#2}^{#1} \pbrac{#3} } }
\newcommand*{\gci}[3][]{ \ci[{#2} {#1}]{#3} }
\newcommand*{\gcielt}[4][]{ \ci[{#2} {#1}]{#3} \pbrac{#4} }
\newcommand*{\gcieltvars}[5][]{ \civars[{#2} {#1}]{#3}{#4} \pbrac{#5} }
\newcommand*{\orbits}[1]{\ensuremath{\Omega \pbrac*{#1}}}
\newcommand*{\posset}{\ensuremath{\fieldname{P}}}
\newcommand*{\poscols}[1]{\ensuremath{\posset^{#1}}}
\newcommand*{\intset}[1]{\ensuremath{\sbrac*{#1}}}
\newcommand*{\transport}[1]{\ensuremath{\sbrac*{#1}}}
\DeclarePairedDelimiter{\pbrac}{(}{)}
\DeclarePairedDelimiter{\sbrac}{[}{]}
\DeclarePairedDelimiter{\cbrac}{\{}{\}}
\DeclarePairedDelimiter{\abrac}{\langle}{\rangle}
\DeclarePairedDelimiter{\abs}{\lvert}{\rvert}
\DeclarePairedDelimiter{\floor}{\lfloor}{\rfloor}
\DeclarePairedDelimiter{\trans}{[}{]}
\DeclareMathOperator{\Aut}{Aut}
\DeclareMathOperator{\fix}{fix}
\DeclareMathOperator{\Fix}{Fix}
\DeclareMathOperator{\stab}{stab}
\newcommand*{\funccomp}{\mathbin{\square}}
\newcommand*{\quot}[2]{\ensuremath{\sfrac{#1}{#2}}}
\newcommand*{\subgp}{\ensuremath{\subseteq}}
\newcommand*{\code}[1]{\texttt{#1}}
\tikzstyle{every picture}+=[
\begin{document}
\title{Species with an equivariant group action}
\author{Andrew Gainer-Dewar}
\address{
  Department of Mathematics and Computer Science,
  Hobart and William Smith Colleges,
  300 Pulteney Street,
  Geneva, New York, USA 06051
}
\curraddr{UConn Health Center for Quantitative Medicine, 195 Farmington Avenue, Farmington, CT, USA}
\email{andrew.gainer.dewar@gmail.com}

\maketitle

\begin{abstract}Joyal's theory of combinatorial species provides a rich and elegant framework for enumerating combinatorial structures by translating structural information into algebraic functional equations.
  We also extend the theory to incorporate information about ``structural'' group actions (i.e.~those which commute with the label permutation action) on combinatorial species, using the $\Gamma$-species of Henderson, and present P\'{o}lya-theoretic interpretations of the associated formal power series for both ordinary and $\Gamma$-species.
  We define the appropriate operations $+$, $\cdot$, $\circ$, and $\square$ on $\Gamma$-species, give formulas for the associated operations on $\Gamma$-cycle indices, and illustrate the use of this theory to study several important examples of combinatorial structures.
  Finally, we demonstrate the use of the Sage computer algebra system to enumerate $\Gamma$-species and their quotients.
\end{abstract}

\section{Preliminaries}
\label{sec:prelim}

\subsection{Classical P\'{o}lya theory}
\label{sec:polya}
We recall here some classical results of P\'{o}lya theory for convenience.

Let $\Lambda$ denote the ring of abstract symmetric functions and $p_{i}$ the elements of the power-sum basis of $\Lambda$.
Further, let $P$ denote the ring of formal power series in the family of indeterminates $x_{1}, x_{2}, \dots$, and let $\eta: \Lambda \to P$ denote the map which expands each symmetric function in the underlying $x$-variables.

Let $G$ be a finite group which acts on a finite set $S$ of cardinality $n$.
The \emph{classical cycle index polynomial} of the action of $G$ on $S$ is the power series
\begin{equation}
  \label{eq:pcisdef}
  \ci{G} \pbrac{p_{1}, p_{2}, \dots, p_{n}} = \frac{1}{\abs{G}} \sum_{\sigma \in G} p_{\sigma}
\end{equation}
where $p_{\sigma} = p_{1}^{\sigma_{1}} p_{2}^{\sigma_{2}} \dots$ for $\sigma_{i}$ the number of $i$-cycles of the action of $\sigma$ on $S$.
(In particular, if $G \subgp \symgp{n}$, we frequently consider the action of $G$ on $\intset{n}$ as permutations; then $p_{\sigma}$ simply counts the $i$-cycles in $\sigma$ as a permutation.)

In this language, the celebrated P\'{o}lya enumeration theorem then has a simple form:
\begin{theorem}[P\'{o}lya enumeration theorem]
  \label{thm:polya}
  Let $\ci{G}$ be the classical cycle index polynomial of a fixed action of the finite group $G$ on the finite set $S$ and let $\pi = \abrac{\pi_{1}, \dots, \pi_{k}}$ be a vector of positive integers summing to $n$.
  Then the number of $G$-orbits of colorings of $S$ having $\pi_{i}$ instances of color $i$ is equal to the coefficient of $x_{\pi} = x_{1}^{\pi_{1}} x_{2}^{\pi_{2}} \dots x_{k}^{\pi_{k}}$ in $\eta \pbrac{\ci{G}}$.
\end{theorem}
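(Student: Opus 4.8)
The plan is to deduce the result from a weighted form of Burnside's lemma (the Cauchy--Frobenius counting argument). First I would set up the objects precisely: a coloring of $S$ with colors drawn from $\intset{k}$ is a function $c \colon S \to \intset{k}$, and $G$ acts on the set of all such colorings by $\pbrac{\sigma \cdot c}\pbrac{s} = c\pbrac{\sigma^{-1}\pbrac{s}}$. To each coloring I attach the monomial weight $w\pbrac{c} = \prod_{s \in S} x_{c\pbrac{s}}$; if $c$ uses color $i$ exactly $\pi_{i}$ times, then $w\pbrac{c} = x_{\pi}$. Because $w$ depends only on the multiset of colors used and the $G$-action merely permutes which label carries which color, $w$ is constant on each $G$-orbit and so descends to a well-defined weight on orbits. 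The quantity I want --- the number of orbits of content $\pi$ --- is then exactly the coefficient of $x_{\pi}$ in $\sum_{O} w\pbrac{O}$, the sum ranging over all orbits $O$ of colorings.

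Next I would establish the weighted Burnside identity
\begin{equation*}
  \sum_{O} w\pbrac{O} = \frac{1}{\abs{G}} \sum_{\sigma \in G} \sum_{c \in \Fix\pbrac{\sigma}} w\pbrac{c},
\end{equation*}
where $\Fix\pbrac{\sigma}$ denotes the set of colorings fixed by $\sigma$. Since the $G$-action preserves $w$, this follows from applying the ordinary orbit-counting lemma separately to each $w$-graded piece of the coloring set (equivalently, by summing the usual Burnside identity over the finitely many values of the weight).

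The heart of the computation is to evaluate the inner sum for a fixed $\sigma$. A coloring $c$ lies in $\Fix\pbrac{\sigma}$ precisely when $c$ is constant on each cycle of $\sigma$, so choosing such a $c$ amounts to assigning a single color to each cycle of $\sigma$. A cycle of length $i$ painted with color $j$ contributes $x_{j}^{i}$ to the weight, and summing over the color choice for one such cycle gives $\sum_{j} x_{j}^{i}$, which is exactly $\eta\pbrac{p_{i}}$. Because the weight factors as a product over the disjoint cycles, grouping the cycles of $\sigma$ by length (with $\sigma_{i}$ cycles of length $i$) yields
\begin{equation*}
  \sum_{c \in \Fix\pbrac{\sigma}} w\pbrac{c} = \prod_{i} \eta\pbrac{p_{i}}^{\sigma_{i}} = \eta\pbrac{p_{\sigma}},
\end{equation*}
using that $\eta$ is a ring homomorphism. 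Substituting this into the weighted Burnside identity and invoking the definition \eqref{eq:pcisdef} of $\ci{G}$ gives $\sum_{O} w\pbrac{O} = \eta\pbrac{\ci{G}}$, and extracting the coefficient of $x_{\pi}$ finishes the argument.

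I expect the only real obstacle to be the bookkeeping in this cycle-by-cycle factorization: verifying cleanly that the weight of a $\sigma$-fixed coloring splits as a product over the cycles of $\sigma$, that the per-cycle sum over colors reproduces the power sum $\eta\pbrac{p_{i}}$, and hence that the full product collapses to $\eta\pbrac{p_{\sigma}}$. Once that identity is in hand, the remainder is the standard orbit-counting machinery.
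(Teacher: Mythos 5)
Your proof is correct and coincides with the paper's own method: the paper recalls this classical theorem without proof, but its proof of the $\Gamma$-species generalization (\cref{thm:polyagspec}) is precisely your argument---the monomial weight $W(c) = \prod_{s \in S} x_{c(s)}$, the weighted Burnside lemma (\cref{thm:burnside.gen}), and the evaluation $\sum_{c \in \Fix(\sigma)} W(c) = \eta\pbrac{p_{\sigma}}$ via colorings constant on the cycles of $\sigma$---specialized to a trivial group action. The only point to tidy is that with colors drawn from $\intset{k}$ your per-cycle sum $\sum_{j=1}^{k} x_{j}^{i}$ is the $k$-variable truncation of $\eta\pbrac{p_{i}}$ rather than $\eta\pbrac{p_{i}}$ itself; either let colors range over all positive integers (as the paper does in \cref{def:plstruct}) or observe that truncating to $k$ variables does not change the coefficient of $x_{\pi}$.
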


\subsection{Combinatorial species}
\label{sec:specprelim}
The theory of combinatorial species, introduced by Andr\'{e} Joyal in \cite{joy:species}, provides an elegant framework for understanding the connection between classes of combinatorial structures and their associated counting series.
We adopt the categorical perspective on species; the reader unfamiliar with these constructs should first consult the \enquote{species book} \cite{bll} for a primer on the associated combinatorics.

Let $\catname{FinSet}$ denote the category of finite sets with set maps and $\catname{FinBij}$ denote its \enquote{core}, the groupoid\footnote{Recall that a \emph{groupoid} is a category whose morphisms are all isomorphisms.} of finite sets with bijections.
A \emph{combinatorial species} $F$ is then a functor $F: \catname{FinSet} \to \catname{FinBij}$.
Specifically, $F$ carries each set $A$ of \enquote{labels} to the set $F \sbrac{A}$ of \enquote{$F$-structures labeled by $A$}, and each permutation $\sigma: A \to A$ to a permutation $F \sbrac{\sigma}: F \sbrac{A} \to F \sbrac{A}$.
(Thus, for example, for the species $\specgraph$ of graphs labeled at vertices, a permutation $\sigma \in \symgp{4}$ is transported to a permutation $\specgraph \sbrac{\sigma}$ on the class of labeled graphs with four vertices.)
The crucial combinatorial insight of species theory is that, for enumerative purposes, it is the algebraic structure of the group $F \sbrac{\symgp{A}}$ of \enquote{relabelings of $F$-structures over $A$} which is important, and \emph{not} the combinatorial details of the $F$-structures themselves.

Associated to each species $F$ are several formal power series which enumerate various sorts of $F$-structures.
Classically, the generating functions for labeled and unlabeled $F$-structures have received the most attention; species-theoretic analysis instead uses the \emph{cycle index series}, given by
\begin{equation}
  \label{eq:cidef}
  \ci{F} = \sum_{n \geq 0} \frac{1}{n!} \sum_{\sigma \in \symgp{n}} \fix \pbrac*{F \sbrac{\sigma}} p_{1}^{\sigma_{1}} p_{2}^{\sigma_{2}} \dots
\end{equation}
where $\sigma_{i}$ is the number of $i$-cycles in $\sigma$ and $p_{i}$ is a formal indeterminate.
It is easily shown (cf.~\cite{bll}) that we can recover the generating functions for labeled and unlabeled $F$-structures from $\ci{F}$, essentially by applying Burnside's lemma to the actions of $\symgp{n}$; however, the algebra of cycle index series captures the calculus of combinatorial structures more fully than that of generating functions.
Thus, we generally work at the cycle-index level until we have characterized a species of interest, then extract the desired enumerations.

It is often meaningful to speak of maps between combinatorial classes; for example, there is a natural map from the class $\spectree$ of trees to the class $\specgraph$ of simple graphs which simply interprets each tree as a graph.
Indeed, this map is \enquote{natural} in the sense that it respects the structure of the trees and is not dependent on labelings; this can be captured either by saying that it acts on \enquote{unlabeled trees and graphs} or by noting that it commutes with the actions of $\symgp{n}$ on labels.
Since $\spectree$ and $\specgraph$ are each functors, it turns out that this \enquote{naturality} condition is equivalent to the category-theoretic notion of a \emph{natural transformation}.
We can then define the category $\catname{Spec}$ of species as simply the functor category $\catname{FinBij}^{\catname{FinSet}}$.
As noted in \cite[\S 1.1]{argthesis}, the epi-, mono-, and isomorphisms of this category have natural combinatorial interpretations as \enquote{species embeddings}, \enquote{species coverings}, and \enquote{species isomorphisms}.
(Of course, this sort of category-theoretic approach obscures the combinatorial applications of the theory, but the compactness of the representation is attractive, and it suggests that this is a \enquote{natural} structure.)

\subsection{P\'{o}lya theory for species}
\label{sec:polyaspec}
Once again, let $F$ be a combinatorial species and let $\ci{F}$ be its cycle index series.
The formal indeterminates $p_{i}$ in \cref{eq:cidef} may be interpreted as the elements of the power-sum basis of the ring $\Lambda$ of abstract symmetric functions introduced in \cref{sec:polya}.
To demonstrate the usefulness of this interpretation, we note that the P\'{o}lya-theoretic cycle index polynomial of \cref{eq:pcisdef} and the species-theoretic cycle index series of \cref{eq:cidef} are intimately related.

\begin{lemma}[\protect{\cite[\S~3.2.1, Prop.~13, eq.~3]{joy:species}}]
  \label{thm:cisrel}
  Let $F$ be a combinatorial species.
  Denote by $\orbits{F}$ the collection of orbits\footnote{Note that $\orbits{F}$ corresponds to the molecular decomposition of $F$.} of $F$-structures under the actions of the symmetric groups.
  For each such orbit $\omega \in \orbits{F}$, let $\stab \omega$ be the subgroup of $\symgp{n}$ which fixes\footnote{For a given $\omega$, all available choices of subgroup will be conjugate, and so the formula will not be affected by this choice.} some element of $\omega$.
  Then
  \begin{equation}
    \label{eq:cisrel}
    \ci{F} = \sum_{\omega \in \orbits{F}} \ci{\stab \omega}.
  \end{equation}
\end{lemma}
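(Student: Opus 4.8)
The plan is to prove the identity degree by degree, since both sides decompose according to the number $n$ of labels. Fix $n$ and examine the degree-$n$ part of $\ci{F}$, namely $\frac{1}{n!}\sum_{\sigma\in\symgp{n}}\fix\pbrac{F\sbrac{\sigma}}p_{\sigma}$, where I abbreviate $p_{\sigma}=p_{1}^{\sigma_{1}}p_{2}^{\sigma_{2}}\cdots$ as in \cref{eq:pcisdef}. The first step is to reinterpret $\fix\pbrac{F\sbrac{\sigma}}$ as a count of structures rather than a single integer: by definition it is the number of $s\in F\sbrac{\intset{n}}$ with $F\sbrac{\sigma}(s)=s$, i.e.\ the number of $F$-structures stabilized by $\sigma$. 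This lets me replace the weighted sum over $\sigma$ by a double sum over pairs $(\sigma,s)$ with $\sigma\in\stab(s)$, and then interchange the order of summation (a finite Fubini) to obtain $\frac{1}{n!}\sum_{s\in F\sbrac{\intset{n}}}\sum_{\sigma\in\stab(s)}p_{\sigma}$.

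Next I would organize the outer sum by $\symgp{n}$-orbits. The key observation is that the inner sum $\sum_{\sigma\in\stab(s)}p_{\sigma}$ is constant as $s$ ranges over a single orbit $\omega$. Indeed, if $s'=\tau\cdot s$ then $\stab(s')=\tau\,\stab(s)\,\tau^{-1}$, and since $p_{\sigma}$ depends only on the cycle type of $\sigma$ as a permutation of $\intset{n}$, conjugation by $\tau$ leaves each term unchanged, so that $p_{\tau\sigma\tau^{-1}}=p_{\sigma}$. Thus the inner sum is a well-defined invariant of the orbit $\omega$, which incidentally justifies the footnote in the statement that the choice of stabilizer representative is immaterial.

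With that in hand, I would apply the orbit--stabilizer theorem: each orbit $\omega$ of degree $n$ has cardinality $\abs{\omega}=n!/\abs{\stab\omega}$, so summing the (constant) inner term over the $\abs{\omega}$ structures in $\omega$ and dividing by $n!$ yields exactly $\frac{1}{\abs{\stab\omega}}\sum_{\sigma\in\stab\omega}p_{\sigma}$, which is the classical cycle index polynomial $\ci{\stab\omega}$ of \cref{eq:pcisdef}. Summing over all orbits of degree $n$ and then over all $n$ collapses the double sum into $\sum_{\omega\in\orbits{F}}\ci{\stab\omega}$, as claimed.

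The routine bookkeeping (the Fubini interchange and the arithmetic with $n!$) is straightforward; the only step requiring genuine care is the orbit-invariance of the inner sum, which rests on the fact that $p_{\sigma}$ is a class function on $\symgp{n}$. This is exactly what makes the assignment $\omega\mapsto\ci{\stab\omega}$ well-defined and is the conceptual heart of the identity.
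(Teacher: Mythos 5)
Your proposal is correct, and each step holds up: rewriting $\fix\pbrac{F\sbrac{\sigma}}$ as a count of fixed structures, the finite Fubini interchange, the conjugation-invariance of $p_{\sigma}$ as a class function (which, as you note, is also what justifies the footnote about the choice of $\stab\omega$), and the orbit--stabilizer count $\abs{\omega} = n!/\abs{\stab\omega}$ combine to give exactly \cref{eq:cisrel} degree by degree. For comparison: the paper offers no proof of this lemma at all---it is quoted directly from Joyal---so there is nothing to diverge from; your Burnside-style double-counting argument is the standard proof of this fact, equivalent to the observation (via the molecular decomposition mentioned in the paper's footnote) that a single orbit $\omega$ of structures on $n$ labels contributes precisely the classical cycle index polynomial of its stabilizer subgroup.
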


Thus, we may reasonably hope to extend the classical P\'{o}lya theory which results from \cref{thm:polya} to the species-theoretic context.
Typical species-theoretic analysis requires treating all structures as labeled and considering the orbits of structures under the actions of symmetric groups on those labels.
To connect this idea with the P\'{o}lya-theoretic idea of colors, we introduce an intermediate notion.

\begin{definition}
  \label{def:plstruct}
  Let $F$ be a combinatorial species and fix a positive integer $n$.
  Let $\posset$ denote the set of positive integers and let $\poscols{n}$ denote the set of \emph{colorings} $c: \posset \to \intset{n}$.
  Let $\symgp{n}$ act on $\poscols{n}$ by $\pbrac*{\sigma \cdot c} \pbrac{i} = c \pbrac*{\sigma^{-1} \pbrac{i}}$.
  Then an element of $F \transport{n} \times \poscols{n}$ is a \emph{colored $F$-structure}, and a specific element $\pbrac{T, c} \in F \transport{n} \times \poscols{n}$ is said to have \emph{coloring} $c$.

  Fix a vector $\pi = \abrac*{\pi_{1}, \dots, \pi_{k}}$ of positive integers summing to $n$ and let $c_{\pi} \in \poscols{n}$ be the coloring where the first $\pi_{1}$ integers are the fiber of $1$, the next $\pi_{2}$ are the fiber of $2$, and so on.
  Then a \emph{partially-labeled $F$-structure with profile $\pi$} is an orbit of an $F$-structure with coloring $c_{\pi}$ under the action of $\symgp{n}$.
\end{definition}

\begin{example}
  Let $\specgraph$ denote the species of simple graphs.
  Fix the profile vector $\pi = \abrac{3, 1}$.
  \Cref{fig:plgraph} shows a colored $\specgraph \trans{4}$-structure (with the colors represented by node shapes) and its orbit under the action of $\symgp{4}$ (in schematic form, where the labels may be assigned freely).
\end{example}

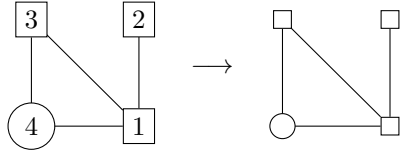
\begin{figure}[hb]
  \centering
  \begin{tikzpicture}[baseline=(current bounding box.base)]
    \node[style=snode] (A) at (-45:1) {1};
    \path [draw] (A) -- (45:1)  node [style=snode] (B) {2};
    \path [draw] (A) -- (135:1) node [style=snode] (C) {3};
    \path [draw] (C) -- (-135:1) node [style=cnode] (D) {4} -- (A);
  \end{tikzpicture}
  \quad $\longrightarrow$ \quad
  \begin{tikzpicture}[baseline=(current bounding box.base)]
    \node[style=snode] (A) at (-45:1) {};
    \path [draw] (A) -- (45:1)  node [style=snode] (B) {};
    \path [draw] (A) -- (135:1) node [style=snode] (C) {};
    \path [draw] (C) -- (-135:1) node [style=cnode] (D) {} -- (A);
  \end{tikzpicture}
  \caption{A colored simple graph with $4$ vertices and its associated partially-labeled graph with profile $\abrac{3, 1}$ (in schematic form)}
  \label{fig:plgraph}
\end{figure}

This notion of a partially-labeled $F$-structure refines the classical P\'{o}lya-theoretic notion of a `colored' $F$-structure.
In particular, if we can enumerate partially-labeled $F$-structures with all profiles, we can use this information to count the classical $k$-colored structures by summing over all profile vectors with $k$ parts.
In fact, the enumeration of partially-labeled $F$-structures can be completed with no more than the cycle index series $\ci{F}$, as is shown in \cite[eq.~4.3.23]{bll}.

\begin{theorem}[P\'{o}lya's theorem for species]
  \label{thm:polyaspec}
  Let $F$ be a combinatorial species with cycle index series $\ci{F}$ and fix a vector $\pi = \abrac{\pi_{1}, \pi_{2}, \dots, \pi_{k}}$ of positive integers.
  Let $\eta: \Lambda \to P$ be the map which expands each abstract symmetric function as a formal power series in variables $x_{i}$.
  Then the number of partially-labeled $F$-structures of profile $\pi$ is equal to the coefficient of $x_{\pi}$ in $\eta \pbrac{\ci{F}}$.
\end{theorem}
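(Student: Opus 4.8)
\emph{Proof proposal.} The plan is to prove the statement by reducing it, orbit by orbit, to the classical P\'{o}lya enumeration theorem (\Cref{thm:polya}) through the molecular decomposition of \Cref{thm:cisrel}. First I would use the fact that $\eta$ is additive together with \cref{eq:cisrel} to write
\begin{equation*}
  \eta \pbrac{\ci{F}} = \sum_{\omega \in \orbits{F}} \eta \pbrac{\ci{\stab \omega}},
\end{equation*}
so that it suffices to identify the contribution of each orbit $\omega$ to the coefficient of $x_{\pi}$. Each orbit $\omega$ consists of $F$-structures on a fixed label set, say $\intset{n_{\omega}}$, and $\stab \omega$ is a subgroup of $\symgp{n_{\omega}}$ acting on $\intset{n_{\omega}}$; thus $\ci{\stab \omega}$ is precisely a classical cycle index polynomial in the sense of \cref{eq:pcisdef}.

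Next I would dispose of the degree bookkeeping. Since each $\sigma \in \stab \omega$ permutes $\intset{n_{\omega}}$, the monomial $p_{\sigma}$ is homogeneous of degree $n_{\omega}$ once we set $\deg p_{i} = i$, and $\eta$ carries $p_{i}$ to a form homogeneous of degree $i$ in the $x$-variables. Hence $\eta \pbrac{\ci{\stab \omega}}$ is homogeneous of degree $n_{\omega}$, and its $x_{\pi}$-coefficient vanishes unless $n_{\omega} = \sum_{i} \pi_{i} = n$. For the surviving orbits, \Cref{thm:polya} applies directly: the coefficient of $x_{\pi}$ in $\eta \pbrac{\ci{\stab \omega}}$ equals the number of $\stab \omega$-orbits of colorings of $\intset{n}$ with $\pi_{i}$ instances of color $i$.

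The crux of the argument --- and the step I expect to demand the most care --- is the combinatorial bookkeeping that reassembles these per-orbit counts into the count of partially-labeled structures. Writing $X$ for the set of colored $F$-structures $\pbrac{T, c} \in F\transport{n} \times \poscols{n}$ whose coloring $c$ has profile $\pi$, I would first observe that every profile-$\pi$ coloring is $\symgp{n}$-equivalent to the standard coloring $c_{\pi}$, so that the partially-labeled structures of profile $\pi$ are exactly the $\symgp{n}$-orbits of $X$. Because the $\symgp{n}$-action preserves both the underlying orbit of $T$ and the profile of $c$, the set $X$ decomposes as a disjoint union of $\symgp{n}$-stable pieces $X_{\omega}$ indexed by the orbits $\omega$ with $n_{\omega} = n$, and it remains to identify $\symgp{n} \backslash X_{\omega}$.

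Fixing a representative $T_{0} \in \omega$, so that $\stab \omega$ is its stabilizer in $\symgp{n}$, I would show that the assignment $c \mapsto \pbrac{T_{0}, c}$ induces a bijection from the $\stab \omega$-orbits of profile-$\pi$ colorings onto $\symgp{n} \backslash X_{\omega}$. Surjectivity holds because any $T \in \omega$ equals $F\sbrac{\tau} T_{0}$ for some $\tau \in \symgp{n}$, whence applying $\tau^{-1}$ sends $\pbrac{T, c}$ to $\pbrac{T_{0}, \tau^{-1} \cdot c}$ and so exhibits the orbit of $\pbrac{T, c}$ in the image; injectivity holds because $\pbrac{T_{0}, c_{1}}$ and $\pbrac{T_{0}, c_{2}}$ are $\symgp{n}$-equivalent exactly when some $\sigma$ fixing $T_{0}$ carries $c_{1}$ to $c_{2}$, i.e.\ exactly when $c_{1}$ and $c_{2}$ share a $\stab \omega$-orbit. (The choice of $T_{0}$ alters $\stab \omega$ only up to conjugacy, which leaves the orbit count unchanged, consistent with the footnote to \Cref{thm:cisrel}.) Summing $\abs{\symgp{n} \backslash X_{\omega}}$ over all orbits $\omega$ with $n_{\omega} = n$ then yields the total number of partially-labeled $F$-structures of profile $\pi$, and matching this against the term-by-term classical computation above completes the proof. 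The main obstacle is therefore organizational rather than technical: keeping the diagonal $\symgp{n}$-action on colored structures cleanly separated from the $\stab \omega$-action on colorings long enough to see that the two enumerations coincide.
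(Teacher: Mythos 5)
Your proof is correct, but it follows a genuinely different route from the paper's. The paper never touches the molecular decomposition here: it obtains \cref{thm:polyaspec} as the trivial-$\Gamma$ specialization of \cref{thm:polyagspec}, whose proof applies the weighted Burnside lemma (\cref{thm:burnside.gen}) to the commuting actions of $\Gamma$ and $\symgp{n}$ on $F \transport{n} \times \poscols{n}$ with the weight $W \pbrac*{T, c} = \prod_{i \in \intset{n}} x_{c(i)}$; the paper even remarks that this yields a proof \enquote{which does not require \cref{thm:cisrel}}. You take precisely the route the paper sidesteps: expand $\eta \pbrac*{\ci{F}}$ orbitwise via \cref{eq:cisrel}, apply the classical \cref{thm:polya} to each stabilizer $\stab \omega$, and reassemble via your bijection between $\stab \omega$-orbits of profile-$\pi$ colorings and $\symgp{n}$-orbits of profile-$\pi$ colored structures lying over the molecular component $\omega$. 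That bijection (surjectivity by translating any representative back to $T_{0}$, injectivity because $\symgp{n}$-equivalence over the fixed $T_{0}$ is exactly $\stab \omega$-equivalence of colorings) is sound, as are your degree bookkeeping and the conjugacy-invariance remark. What each approach buys: yours is elementary, stays entirely inside classical P\'{o}lya theory, and makes the combinatorial content of Joyal's \cref{thm:cisrel} vivid --- each molecular component contributes exactly its stabilizer's classical P\'{o}lya count; the paper's weighted-Burnside computation, by contrast, proves the $\Gamma$-equivariant statement at no extra cost, which your method would not readily do, since for $\gamma \neq e$ one must count $\gamma$-\emph{fixed} $\symgp{n}$-orbits rather than all orbits, $\gamma$ may permute molecular components rather than preserve them, and extracting fixed-orbit counts would force a Burnside-type average anyway.
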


This notion of \enquote{partially-labeled} structures allows us to interpolate between labeled and unlabeled structures.
In particular, the notion of unlabeled $F$-structures of order $n$ may be recovered by taking the partially-labeled $F$-structures of profile $\pi = \abrac{n}$, while the labeled $F$-structures of order $n$ may be recovered by taking the partially-labeled $F$-structures of profile $\pi = \abrac{1, 1, \dots, 1}$.
This leads to a straightforward proof of two important enumerative results on species.

\begin{theorem}[\protect{\cite[\S 1.2, Thm.~8]{bll}}]
  \label{thm:specgf}
  Let $F$ be a combinatorial species.
  Denote by $F(x)$ the exponential generating function of labeled $F$-structures and by $\tilde{F}(x)$ the ordinary generating function of unlabeled $F$-structures.
  Then we have the following identities of formal power series:
  \begin{subequations}
    \begin{gather}
      F(x) = \civars{F}{x,0,0,\dots} \label{eq:ciegf} \\
      \tilde{F}(x) = \civars{F}{x, x^{2}, x^{3}, \dots} \label{eq:ciogf}
    \end{gather}
  \end{subequations}
\end{theorem}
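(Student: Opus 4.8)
The plan is to prove both identities by specializing the cycle index series $\ci{F}$ of \cref{eq:cidef}, using the observation recorded just above the statement: the labeled $F$-structures of order $n$ are exactly the partially-labeled structures of profile $\abrac{1, 1, \dots, 1}$, while the unlabeled $F$-structures of order $n$ are the partially-labeled structures of profile $\abrac{n}$. Each identity asserts that a particular substitution into the formal variables $p_{i}$ collapses $\ci{F}$ onto the generating function for the corresponding extreme profile, and in each case I would verify this by tracking the effect of the substitution on each $\symgp{n}$-summand of \cref{eq:cidef}.

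First I would establish \cref{eq:ciegf}. Substituting $p_{1} = x$ and $p_{i} = 0$ for all $i \geq 2$ into \cref{eq:cidef} annihilates every summand indexed by a $\sigma \in \symgp{n}$ that has at least one cycle of length $\geq 2$; the unique surviving permutation in each $\symgp{n}$ is the identity, which has $\sigma_{1} = n$ and $\sigma_{i} = 0$ for $i \geq 2$. Since $\fix \pbrac*{F \sbrac{\mathrm{id}}} = \abs{F \transport{n}}$ and the surviving monomial is $p_{1}^{n} \mapsto x^{n}$, the substitution yields $\sum_{n \geq 0} \frac{1}{n!} \abs{F \transport{n}} x^{n}$, which is precisely the exponential generating function $F(x)$. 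I would emphasize that this is a genuinely formal substitution in the $p_{i}$ rather than a specialization through $\eta$, since no assignment of the underlying $x$-variables can force $\eta \pbrac{p_{2}} = 0$ while keeping $\eta \pbrac{p_{1}} \neq 0$.

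For \cref{eq:ciogf} I would substitute $p_{i} = x^{i}$. Because every $\sigma \in \symgp{n}$ satisfies $\sum_{i} i \sigma_{i} = n$, each monomial $p_{1}^{\sigma_{1}} p_{2}^{\sigma_{2}} \dots$ arising from the $n$-th block of \cref{eq:cidef} is sent to $x^{n}$, so that block contributes $\frac{1}{n!} \sum_{\sigma \in \symgp{n}} \fix \pbrac*{F \sbrac{\sigma}} \, x^{n}$. Applying Burnside's lemma to the action of $\symgp{n}$ on $F \transport{n}$ identifies the bracketed average with the number of $\symgp{n}$-orbits of $F$-structures of order $n$, i.e.\ the number of unlabeled $F$-structures of order $n$; summing over $n$ gives the ordinary generating function $\tilde{F}(x)$. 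Equivalently, one may route this through \cref{thm:polyaspec}: setting $x_{1} = x$ and $x_{j} = 0$ for $j \geq 2$ sends $\eta \pbrac{p_{i}} = \sum_{j} x_{j}^{i}$ to $x^{i}$, so this specialization of $\eta \pbrac{\ci{F}}$ is exactly $\civars{F}{x, x^{2}, x^{3}, \dots}$, and its coefficient of $x^{n}$ is the coefficient of $x_{1}^{n} = x_{\abrac{n}}$ in $\eta \pbrac{\ci{F}}$, which \cref{thm:polyaspec} counts as the partially-labeled structures of profile $\abrac{n}$.

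The bookkeeping is light throughout; the one step carrying real content is the passage from fixed-point counts to orbit counts in \cref{eq:ciogf}, where Burnside's lemma (equivalently, the orbit decomposition of \cref{thm:cisrel}) is essential. I expect the main obstacle to be purely expository: making precise that the substitution $p_{i} = 0$ ($i \geq 2$) used for \cref{eq:ciegf} lives at the level of the abstract symmetric-function variables and cannot be read off from \cref{thm:polyaspec} directly, in contrast to the substitution $p_{i} = x^{i}$ for \cref{eq:ciogf}, which does descend from an honest specialization of the $x$-variables through $\eta$.
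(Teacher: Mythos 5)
Your proof is correct, but it takes a genuinely different route from the paper's for the first identity. The paper offers no standalone proof of \cref{thm:specgf}; it derives it from \cref{thm:polyaspec} via the profile interpolation noted just before the statement, and the argument is written out explicitly for the $\Gamma$-equivariant generalization \cref{thm:gspecgf}, of which \cref{thm:specgf} is the $\gamma = e$ case. There, \emph{both} identities come from a single coefficient-extraction principle: labeled structures of order $n$ are partially-labeled structures of profile $\abrac{1, 1, \dots, 1}$, so one reads off the coefficient of the multilinear monomial $x_{1} x_{2} \cdots x_{n}$ in $\eta \pbrac*{\ci{F}}$ and observes that such a term occurs in $\eta \pbrac*{p_{\sigma}}$ only when $\sigma$ is the identity, isolating the $p_{1}^{n}$ coefficient; dually, the coefficient of $x_{1}^{n}$ receives a contribution from \emph{every} degree-$n$ monomial $p_{\sigma}$, giving \cref{eq:ciogf}. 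You instead prove \cref{eq:ciegf} by direct formal substitution into the definition \cref{eq:cidef} (annihilating every non-identity $\sigma$) and \cref{eq:ciogf} by an unweighted application of Burnside's lemma --- essentially the classical argument of \cite{bll}. Your route is self-contained and independent of the P\'{o}lya machinery; the paper's route buys uniformity, treating the two identities as the two extreme profiles of one theorem, which is exactly what generalizes cleanly to \cref{thm:gspecgf}.

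One remark of yours is wrong as stated: you claim that \cref{eq:ciegf} ``cannot be read off from \cref{thm:polyaspec} directly.'' Your underlying observation is sound --- no specialization of the $x$-variables realizes $\eta \pbrac*{p_{1}} = x$ together with $\eta \pbrac*{p_{i}} = 0$ for $i \geq 2$ --- but the paper sidesteps substitution entirely by \emph{extracting the coefficient} of the square-free monomial $x_{1} x_{2} \cdots x_{n}$, which only $\eta \pbrac*{p_{1}^{n}}$ can produce. This is precisely how \cref{thm:polyaspec} does yield \cref{eq:ciegf}, so the asymmetry you describe between the two identities is illusory. The slip does not affect the validity of your proof, only its closing commentary.
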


\subsection{Species-theoretic enumeration of rooted binary leaf-multi-labeled trees}
As an example of the application of this theory, we now investigated the \enquote{rooted binary leaf-multi-labeled trees} of \cite{cejm:mltrees}.
To begin, we will consider the species $\specrblt$ consisting of rooted binary trees whose internal nodes are unlabeled.
Letting $\speclin[2]$ denote the species of lists of length $2$, we clearly have that
\begin{equation}
  \label{eq:rblt}
  \specrblt = \specsing[X] + \speclin[2] \pbrac*{\specrblt}.
\end{equation}
This allows for recursive calculation of the two-sort cycle index series of $\specrblt$.

In light of the application in \cite{cejm:mltrees}, we are interested in the enumeration of $\specrblt$-structures which are partially-labeled from a set of $k$ labels.
Let $\eta_{k}: \Lambda \to P$ denote the map which expands each symmetric variables in the family $\cbrac{x_{1}, \dots, x_{k}}$ of $k$ indeterminates and let $\lambda = \sbrac{\lambda_{1}, \lambda_{2}, \dots, \lambda_{i}} \vdash n$ be a partition with no more than $k$ parts.
Then, by \cref{thm:polyaspec}, the coefficient of $x_{\lambda}$ in $\eta_{k} \pbrac*{\ci{\specrblt}}$ is the number of $\specrblt$-structures with $n$ leaves with $\lambda_{1}$ of them labeled $1$, $\lambda_{2}$ labeled $2$, and so forth.
The total number of $k$-multi-labeled $\specrblt$-structures with $n$ vertices is then simply the sum of the coefficients of the degree-$n$ terms in $\eta_{k} \pbrac*{\ci{\specrblt}}$.

We can compute these numbers using the Sage code appearing in \cref{list:rblt}.
This code is shown configured to compute the number of rooted binary leaf-multi-labeled trees with $8$ leaves labeled from $\sbrac{4}$, which it finds to be $366680$ (in agreement with \cite[Table 1]{cejm:mltrees}).

\begin{lstlisting}[float=htbp,caption=Sage code to compute numbers of rooted binary leaf-multi-labeled trees, label=list:rblt, language=Python, texcl=true, xleftmargin=3em]
from sage.combinat.species.library import SingletonSpecies,LinearOrderSpecies

X = species.SingletonSpecies()
L2 = species.SetSpecies(size=2)

RootedBinaryLeafTrees = species.CombinatorialSpecies()
RootedBinaryLeafTrees.define(X + L2(RootedBinaryLeafTrees))

RBLT_sf = RootedBinaryLeafTrees.cycle_index_series().expand_as_sf(4)

print sum(RBLT_sf.coefficient(8).coefficients())
\end{lstlisting}

\section{$\Gamma$-species}
\label{sec:gspecies}
\subsection{Groups acting on species}
\label{sec:groupactspec}
Now let us consider, for a fixed species $F$, the case of a species isomorphism $\phi: F \to F$, which we hereafter call a \emph{species automorphism}.
Diagramatically, this is a choice of a \enquote{set automorphism} (i.e.~permutation) $\phi_{A}: A \to A$ for each $A \in \catname{FinSet}$ such that the diagram in \cref{fig:specautdiag} commutes for all $\sigma \in \symgp{A}$.

\begin{figure}[htbp]
  \centering
  \begin{tikzpicture}[auto,node distance = 2cm]
    \node (A1) {$A$};
    \node (A2) [right of = A1] {$A$};
    \node (FA1t) [above left of = A1] {$F \sbrac{A}$};
    \node (FA1b) [below left of = A1] {$F \sbrac{A}$};
    \node (FA2t) [above right of = A2] {$F \sbrac{A}$};
    \node (FA2b) [below right of = A2] {$F \sbrac{A}$};

    \draw [->] (A1) -- node {$\sigma$} (A2);
    \draw [->] (A1) -- node [swap] {$F$} (FA1t);
    \draw [->] (A1) -- node {$F$} (FA1b);
    \draw [->] (A2) -- node {$F$} (FA2t);
    \draw [->] (A2) -- node [swap] {$F$} (FA2b);
    \draw [->] (FA1t) -- node {$F \sbrac{\sigma}$} (FA2t);
    \draw [->] (FA1b) -- node [swap] {$F \sbrac{\sigma}$} (FA2b);
    \draw [->] (FA1t) -- node [swap] {$\phi_{A}$} (FA1b);
    \draw [->] (FA2t) -- node {$\phi_{A}$} (FA2b);
  \end{tikzpicture}
  \caption{Diagram which must commute if $\phi$ is a species automorphism}
  \label{fig:specautdiag}
\end{figure}

In other words, $\phi_{A}$ is just a permutation of $F \sbrac{A}$ which commutes with all the permutations $F \sbrac{\sigma}$.
This corresponds to the combinatorial notion of a \enquote{structural} or \enquote{label-independent} operation, such as taking the complement of a graph, permuting the colors of a colored graph, or cyclically rotating a finite sequence.

Many important problems in enumerative combinatorics arise when considering the classes of structures which are \enquote{equivalent} under the operation of such a structural operation (or, often, several such operations acting in concert).
In particular, if a group $\Gamma$ acts \enquote{structurally} (i.e. by structural operations) on a combinatorial class, the equivalence classes under $\Gamma$ are the \enquote{$\Gamma$-quotient structures}.

We can capture this idea efficiently in the language of species; we simply want to describe a group $\Gamma$ acting by species isomorphisms $F \to F$ for a fixed species $F$.
Since the collection $\Aut \pbrac{F}$ of all species automorphisms of $F$ already forms a group, we can achieve this classically by taking a specified homomorphism $\Gamma \to \Aut \pbrac{F}$.

Categorically, $\Gamma$ is simply a groupoid with a single object, so we can also achieve our association of $\Gamma$ with some of $F$'s automorphisms by constructing a functor sending $\Gamma$ to $F$ and each element $\gamma$ of $\Gamma$ to some automorphism $\gamma'$ of $F$ in a structure-preserving way.
In other words, we need a functor from $\Gamma$ to $\catname{Spec}$ whose object-image is $F$.

This leads to a very compact definition: for a group $\Gamma$, a \emph{$\Gamma$-species} is a functor $F: \Gamma \to \catname{Spec}$.
If $F$ is a $\Gamma$-species, its \emph{quotient} is the species $\quot{F}{\Gamma}$ defined as follows:
\begin{itemize}
\item
  For a given label set $A$, each element of $\quot{F}{\Gamma} \sbrac{A}$ is a \emph{set} of $F$-structures which form an orbit under the action of $\Gamma$.
\item
  For a given permutation $\sigma \in \sbrac{A}$, the transport $\quot{F}{\Gamma} \sbrac{\sigma}$ sends each $\Gamma$-orbit of $F$-structures labeled by $A$ to the orbit containing the images of the original structures under $\sigma$.
  (This is well-defined because the images of the morphisms of $\Gamma$ are natural isomorphisms of $F$ and thus commute with permutations.)
\end{itemize}

Just as with ordinary species, we can associate to each $\Gamma$-species a formal power series which encodes much of the relevant enumerative data.
This is the \emph{$\Gamma$-cycle index series}, which associates to each element $\gamma$ of $\Gamma$ a classical cycle index series.
The $\Gamma$-cycle index series of a $\Gamma$-species $F$ is given by
\begin{equation}
  \label{eq:gcidefperm}
  \gcielt{\Gamma}{F}{\gamma} = \sum_{n \geq 0} \frac{1}{n!} \sum_{\sigma \in \symgp{n}} \fix \pbrac{\gamma_{\intset{n}} \cdot F \sbrac{\sigma}} p_{1}^{\sigma_{1}} p_{2}^{\sigma_{2}} \dots
\end{equation}
where $\intset{n} = \cbrac{1, 2, \dots, n}$ is a canonical $n$-element set, $\gamma_{A}$ is the permutation $A \to A$ induced by $\gamma$, and $\gamma_{A} \cdot F \sbrac{\sigma}$ is the operation which first permutes the $F$-structures using $\sigma$ and then applies $\gamma_{A}$.
By functorality, $\fix \pbrac{\gamma_{\intset{n}} \cdot F \sbrac{\sigma}}$ is actually a class function on permutations $\sigma \in \symgp{n}$, so we can instead work at the level of conjugacy classes (indexed by partitions of $n$).
In this light, the $\Gamma$-cycle index of a $\Gamma$-species $F$ is given by
\begin{equation}
  \label{eq:gcidefpart}
  \gcielt{\Gamma}{F}{\gamma} = \sum_{n \geq 0} \sum_{\lambda \vdash n} \fix \pbrac{\gamma_{\intset{n}} \cdot F \sbrac{\lambda}} \frac{p_{1}^{\lambda_{1}} p_{2}^{\lambda_{2}} \dots}{z_{\lambda}}
\end{equation}
for $\fix F \sbrac{\lambda} = \fix F \sbrac{\sigma}$ for some choice of permutation $\sigma \in \symgp{n}$ of cycle type $\lambda$, for $\lambda_{i}$ the number of $i$ parts in $\lambda$, and for $z_{\lambda} = \prod_{i} i^{\lambda_{i}} \lambda_{i}!$ the number such that there are $n! / z_{\lambda}$ permutations of cycle type $\lambda$.
(Note that, in particular, for $e$ the identity element of the group $\Gamma$, we necessarily have $\gcielt{\Gamma}{F}{e} = \ci{F}$, the ordinary cycle index of the underlying actionless species $F$.)

The algebra of $\Gamma$-cycle indices is implemented by the \code{GroupCycleIndexSeries} class of Sage \cite{sage}.
We will demonstrate its use in \cref{sec:gspecenum}.

\subsection{$\Gamma$-species maps}
\label{s:gspecmap}
Continuing in the categorical theme, we now define an appropriate notion of \enquote{morphism} for the context of $\Gamma$-species.
Since a $\Gamma$-species is a functor, one reasonable approach is simply to say that a morphism of $\Gamma$-species $F$ and $G$ is a natural transformation $\phi: F \to G$.
However, since $\Gamma$-species are functors whose codomains are themselves functor categories, this requires some unpacking.
Additionally, this definition would in fact allow for the possibility of morphisms between the groups acting on the species, creating additional complexity for limited benefit (since we will generally only be interested in isomorphisms at this level).
Thus, we will take a more concrete approach to the definition.

Suppose $F$ and $G$ are $\Gamma$-species and let $\phi: F \to G$ be a species map of the underlying combinatorial species.
We wish to characterize the sense in which $\phi$ may be \enquote{compatible} with the $\Gamma$-actions on $F$ and $G$.
For any two label sets $A$ and $B$ with bijection $\sigma: A \to B$ and any element $\gamma \in \Gamma$, the fact that $F$ and $G$ are $\Gamma$-species implies the existence of several maps:
\begin{itemize}
\item
  bijections $F \sbrac{\sigma}: F \sbrac{A} \to F \sbrac{B}$ and $G \sbrac{\sigma}: G \sbrac{A} \to G \sbrac{B}$;

\item
  permutations $\gamma_{F \sbrac{A}}: F \sbrac{A} \to F \sbrac{A}$, $\gamma_{F \sbrac{B}}: F \sbrac{B} \to F \sbrac{B}$, $\gamma_{G \sbrac{A}}: G \sbrac{A} \to G \sbrac{A}$, and $\gamma_{G \sbrac{B}}: G \sbrac{B} \to G \sbrac{B}$;

\item
  and set maps $\phi_{A}: F \sbrac{A} \to G \sbrac{A}$ and $\phi_{B}: F \sbrac{B} \to G \sbrac{B}$.
\end{itemize}

We can construct a diagram which encodes the relationships among all these maps; this is shown in \cref{fig:gspecmapdiag}.
This diagram automatically has substantial commutativity: the inner and outer squares commute because $F$ and $G$ are species, and the top and botton squares commute because $\phi$ is a species morphism.
All that is required to make $\phi$ compatible with $\gamma$ is that the left and right squares commute as well.
This gives us our more concrete defintion of a $\Gamma$-species morphism.

\begin{definition}
  \label{def:gspecmap}
  Let $F$ and $G$ be $\Gamma$-species.
  Then a \emph{$\Gamma$-species map} $\phi: F \to G$ is a choice of a set map $\phi_{A}: F \sbrac{A} \to G \sbrac{A}$ for each finite set $A$ such that the diagram in \cref{fig:gspecmapdiag} commutes for every set bijection $\sigma: A \to B$.
  (Equivalently, $\phi$ is a natural transformation $F \to G$.)
  If every map $\phi_{L}$ is a bijection, $\phi$ is a \emph{$\Gamma$-species isomorphism}.
  If every map $\phi_{L}$ is an injection, $\phi$ is a \emph{$\Gamma$-species embedding}.
  If every map $\phi_{L}$ is a surjection, $\phi$ is a \emph{$\Gamma$-species covering}.

  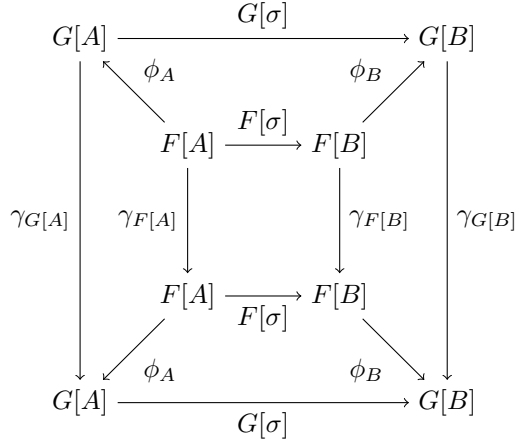
\begin{figure}[htbp]
    \centering
    \begin{tikzpicture}[auto,node distance = 2cm]
      \node (FAt) {$F \sbrac{A}$};
      \node (FAb) [below of = FAt] {$F \sbrac{A}$};
      \node (FBt) [right of = FAt] {$F \sbrac{B}$};
      \node (FBb) [below of = FBt] {$F \sbrac{B}$};

      \node (GAt) [above left of = FAt] {$G \sbrac{A}$};
      \node (GAb) [below left of = FAb] {$G \sbrac{A}$};
      \node (GBt) [above right of = FBt] {$G \sbrac{B}$};
      \node (GBb) [below right of = FBb] {$G \sbrac{B}$};

      \draw [->] (FAt) -- node {$F \sbrac{\sigma}$} (FBt);
      \draw [->] (FAb) -- node [swap] {$F \sbrac{\sigma}$} (FBb);
      \draw [->] (FAt) -- node [swap] {$\gamma_{F \sbrac{A}}$} (FAb);
      \draw [->] (FBt) -- node {$\gamma_{F \sbrac{B}}$} (FBb);

      \draw [->] (GAt) -- node {$G \sbrac{\sigma}$} (GBt);
      \draw [->] (GAb) -- node [swap] {$G \sbrac{\sigma}$} (GBb);
      \draw [->] (GAt) -- node [swap] {$\gamma_{G \sbrac{A}}$} (GAb);
      \draw [->] (GBt) -- node {$\gamma_{G \sbrac{B}}$} (GBb);

      \draw [->] (FAt) -- node [swap] {$\phi_{A}$} (GAt);
      \draw [->] (FAb) -- node {$\phi_{A}$} (GAb);
      \draw [->] (FBt) -- node {$\phi_{B}$} (GBt);
      \draw [->] (FBb) -- node [swap] {$\phi_{B}$} (GBb);
    \end{tikzpicture}
    \caption{Diagram which must commute if $\phi$ is a $\Gamma$-species map}
    \label{fig:gspecmapdiag}
  \end{figure}
\end{definition}

We note that he definitions of $\Gamma$-species isomorphism, embedding, and covering are simply the definitions of species isomorphism, embedding, and covering from \cite[Def.~1.1.4]{argthesis} combined with the compatibility condition.
When there exists a $\Gamma$-species isomorphism $\phi: F \to G$, we will often simply write $F = G$, omitting reference to the specific isomorphism.

With this notion of $\Gamma$-species morphism in hand, we note that the class of all $\Gamma$-species forms a category, which we denote $\catname[\Gamma]{Spec}$.

\subsection{P\'{o}lya theory for $\Gamma$-species}
\label{sec:polya.gspec}
We now revisit the core ideas of \cref{sec:polyaspec} in the context of $\Gamma$-species.
First, we present a useful generalization of Burnside's lemma, which appears (in an unweighted form) as \cite[eq.~A1.51]{bll}.

\begin{lemma}[Weighted generalized Burnside's lemma]
  \label{thm:burnside.gen}
  Let $G$ and $H$ be groups with commuting actions on a finite set $X$.
  Let $W: X \to \fieldname{A}$ be a \emph{weight function} from $X$ to a $\fieldname{Q}$-module $\fieldname{A}$ which is constant on $\pbrac*{G \times H}$-orbits.
  For any endofunction $f: X \to X$, let $\Fix_{X} (f) = \sum_{x \in X, f(x) = x} W(x)$ denote the sum of the weights of the fixed points of $f$.
  Then the sum of the weights of $H$-orbits fixed by a given element $g \in G$ is
  \begin{equation}
    \label{eq:burnside.gen}
    \Fix_{X / H} (g) = \frac{1}{\abs{H}} \sum_{h \in H} \Fix_{X} \pbrac*{g, h}.
  \end{equation}
\end{lemma}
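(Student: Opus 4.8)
The plan is to prove the identity by a Fubini-style double-counting argument: I will expand the right-hand side $\frac{1}{\abs{H}}\sum_{h \in H}\Fix_X(g,h)$ as a double sum over $H$ and over $X$, interchange the order of summation, and evaluate the resulting inner count using the orbit--stabilizer theorem. Throughout I will write $f_{g,h}$ for the endofunction $x \mapsto g \cdot \pbrac{h \cdot x}$, so that $\Fix_X(g,h) = \sum_{x:\,f_{g,h}(x)=x} W(x)$.

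First I would record two preliminary observations. Since $W$ is constant on $\pbrac{G \times H}$-orbits, it is a fortiori constant on the finer $H$-orbits, so it descends to a well-defined function $\bar{W}$ on the quotient $X/H$ with $\bar{W}(\omega) = W(x)$ for any $x \in \omega$; this is where the $\fieldname{Q}$-module structure on $\fieldname{A}$ will eventually let me divide by $\abs{H}$. Second, because the two actions commute, $g \cdot \pbrac{Hx} = H\pbrac{g \cdot x}$ is again an $H$-orbit, so $g$ acts on $X/H$ and the phrase \enquote{$H$-orbit fixed by $g$} — the condition $g \cdot \omega = \omega$ — is meaningful. With these, $\Fix_{X/H}(g) = \sum_{\omega:\,g\cdot\omega=\omega}\bar{W}(\omega)$.

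Next I would expand the right-hand sum and swap the order of summation to obtain $\sum_{h\in H}\Fix_X(g,h) = \sum_{x\in X} W(x)\,\abs{\cbrac{h\in H : g\cdot h\cdot x = x}}$. The crux is to evaluate this inner cardinality. Rewriting $g\cdot h\cdot x = x$ as $h\cdot x = g^{-1}\cdot x$, I observe that a solution $h$ exists precisely when $g^{-1}\cdot x$ lies in the orbit $Hx$, which by commutativity is equivalent to $g\cdot\pbrac{Hx} = Hx$, i.e.\ to $g$ fixing the $H$-orbit of $x$. When this holds, fixing one solution $h_0$ shows that the whole solution set is the left coset $h_0\,\stab_H(x)$, where $\stab_H(x) = \cbrac{h : h\cdot x = x}$; hence its cardinality is $\abs{\stab_H(x)} = \abs{H}/\abs{Hx}$ by orbit--stabilizer. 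Otherwise the set is empty.

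Finally I would regroup the surviving sum by $H$-orbits. For each orbit $\omega$ with $g\cdot\omega=\omega$, the inner factor $\abs{H}/\abs{\omega}$ is constant across $x\in\omega$, while $\sum_{x\in\omega}W(x) = \abs{\omega}\,\bar{W}(\omega)$ by constancy of $W$ on $\omega$; the two factors of $\abs{\omega}$ cancel, so $\omega$ contributes exactly $\abs{H}\,\bar{W}(\omega)$. Summing over the $g$-fixed orbits gives $\sum_{h\in H}\Fix_X(g,h) = \abs{H}\sum_{\omega:\,g\cdot\omega=\omega}\bar{W}(\omega) = \abs{H}\,\Fix_{X/H}(g)$, and dividing by $\abs{H}$ in the $\fieldname{Q}$-module $\fieldname{A}$ yields the claim. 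I expect the only genuine obstacle to be the careful analysis of the inner count $\abs{\cbrac{h : g\cdot h\cdot x = x}}$: one must verify that the commuting actions let $g$ act on $X/H$, that nonemptiness of the solution set is exactly the condition that $g$ fixes the $H$-orbit of $x$, and that the solution set is a full stabilizer coset; once these are in place, the rest is routine summation.
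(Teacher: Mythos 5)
Your proof is correct, and there is in fact no in-paper argument to compare it against: the paper states this lemma without proof, citing only the unweighted form as \cite[eq.~A1.51]{bll}, so your write-up supplies exactly the argument the paper leaves to the reader. Your route is the standard weighted Burnside computation: expand $\sum_{h \in H} \Fix_{X}\pbrac{g,h}$ as a double sum, interchange summation, and evaluate the inner count $\abs{\cbrac{h \in H : g \cdot h \cdot x = x}}$. You handle the one genuinely delicate point correctly: this set is nonempty precisely when $g$ fixes the orbit $Hx$ setwise (since $g \cdot \pbrac{Hx} = H \pbrac{g \cdot x}$ by commutativity and distinct $H$-orbits are disjoint, $g^{-1} x \in Hx$ is equivalent to $g \cdot \pbrac{Hx} = Hx$), and when nonempty it is a left coset $h_{0} \stab_{H}(x)$, hence has cardinality $\abs{H} / \abs{Hx}$ by orbit--stabilizer; moreover, for $x$ in a $g$-fixed orbit this nonemptiness holds for \emph{every} point of the orbit, so the count $\abs{H}/\abs{\omega}$ applies uniformly in your regrouping step, where it cancels against $\sum_{x \in \omega} W(x) = \abs{\omega} \, \bar{W}(\omega)$. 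You also correctly locate the two hypotheses: commutativity is what makes $g$ act on $X/H$ (so that \enquote{$H$-orbit fixed by $g$} is meaningful and $\Fix_{X}\pbrac{g,h}$ is unambiguous), and the $\fieldname{Q}$-module structure is what permits the final division by $\abs{H}$. One minor remark: your argument uses only that $W$ is constant on $H$-orbits; the stronger $\pbrac{G \times H}$-constancy in the statement is not needed for the identity itself, though it is natural in the application (\cref{thm:polyagspec}), where it makes the induced weight on $X/H$ interact well with the residual $G$-action.
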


Now let $F$ be a $\Gamma$-species and let $\gci{\Gamma}{F}$ be its $\Gamma$-cycle index series.
Partial labelings of $F$-structures are easily seen to be $\Gamma$-equivariant.
Thus, we can extend our P\'{o}lya theory for species to incorporate $\Gamma$-species.

\begin{theorem}[P\'{o}lya's theorem for $\Gamma$-species]
  \label{thm:polyagspec}
  Let $F$ be a $\Gamma$-species with $\Gamma$-cycle index series $\gci{\Gamma}{F}$ and fix a vector $\pi = \abrac{\pi_{1}, \dots, \pi_{k}}$ of positive integers.
  Let $\eta: \Lambda \to P$ be the map which expands each abstract symmetric function as a formal power series in variables $x_{i}$.
  Then the number of partially-labeled $F$-structures of profile $\pi$ which are fixed by the action of $\gamma \in \Gamma$ is equal to the coefficient of $x_{\pi}$ in $\eta \pbrac*{\gcielt{\Gamma}{F}{\gamma}}$.
\end{theorem}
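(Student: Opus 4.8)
The plan is to mirror the proof of \cref{thm:polyaspec}, carrying the structural action of $\gamma$ through the argument and invoking the weighted generalized Burnside's lemma (\cref{thm:burnside.gen}) to reconcile the two commuting group actions that appear. Set $n = \pi_{1} + \dots + \pi_{k}$ and let $X$ be the set of colored $F$-structures $\pbrac{T, c}$ with $T \in F \transport{n}$ and $c \in \poscols{n}$ a coloring of profile $\pi$. Two groups act on $X$: the symmetric group $\symgp{n}$ acts diagonally by $\sigma \cdot \pbrac{T, c} = \pbrac{F \sbrac{\sigma} T, \sigma \cdot c}$, which preserves $X$ since relabeling does not alter color multiplicities; and $\Gamma$ acts through the structural operations alone, $\gamma \cdot \pbrac{T, c} = \pbrac{\gamma_{\intset{n}} T, c}$, leaving the coloring untouched. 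By \cref{def:plstruct} the $\symgp{n}$-orbits on $X$ are exactly the partially-labeled $F$-structures of profile $\pi$, and the induced $\Gamma$-action on these orbits is precisely the notion of being ``fixed by $\gamma$'' in the statement.

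The first step is to record that these two actions commute: since $\gamma$ is a species automorphism we have $\gamma_{\intset{n}} \circ F \sbrac{\sigma} = F \sbrac{\sigma} \circ \gamma_{\intset{n}}$, and $\symgp{n}$ acts on colorings independently of the structure while $\gamma$ ignores them. Consequently a pair $\pbrac{T, c} \in X$ is fixed by the combined action of $\gamma$ and $\sigma$ exactly when $\gamma_{\intset{n}} F \sbrac{\sigma} T = T$ and $\sigma \cdot c = c$; as these two conditions are independent, the fixed-point count (with trivial weight $W \equiv 1$) factors as
\[
  \Fix_{X} \pbrac{\gamma, \sigma} = \fix \pbrac{\gamma_{\intset{n}} \cdot F \sbrac{\sigma}} \cdot \abs*{\cbrac{c \in \poscols{n} : \sigma \cdot c = c, \ c \text{ of profile } \pi}}.
\]

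Next I would connect the second factor to the symmetric-function expansion that drives \cref{thm:polya}. Since $\eta \pbrac{p_{i}} = \sum_{j} x_{j}^{i}$, we have $\eta \pbrac{p_{\sigma}} = \prod_{i} \pbrac*{\sum_{j} x_{j}^{i}}^{\sigma_{i}}$, and the coefficient of $x_{\pi}$ here counts precisely the colorings of profile $\pi$ constant on the cycles of $\sigma$, i.e.~the profile-$\pi$ colorings fixed by $\sigma$, matching the second factor above. Writing $\sbrac{x_{\pi}}$ for extraction of this coefficient and noting that only the degree-$n$ summand of $\gcielt{\Gamma}{F}{\gamma}$ can contribute (as $\abs{\pi} = n$), I would then assemble
\[
  \sbrac{x_{\pi}} \eta \pbrac*{\gcielt{\Gamma}{F}{\gamma}} = \frac{1}{n!} \sum_{\sigma \in \symgp{n}} \Fix_{X} \pbrac{\gamma, \sigma} = \Fix_{X / \symgp{n}} \pbrac{\gamma},
\]
the final equality being \cref{thm:burnside.gen} with $H = \symgp{n}$ and $g = \gamma$. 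By the identification in the first paragraph, $\Fix_{X / \symgp{n}} \pbrac{\gamma}$ counts the partially-labeled $F$-structures of profile $\pi$ fixed by $\gamma$, which is the claim.

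I expect the principal difficulty to be bookkeeping rather than conceptual depth: the argument hinges on choosing $X$ and the two actions so that \cref{thm:burnside.gen} applies verbatim, and the two genuinely delicate checks are that the $\gamma$- and $\symgp{n}$-actions commute on \emph{colored} structures (which rests on naturality together with $\gamma$ fixing colorings) and that the induced $\gamma$-action on $\symgp{n}$-orbits really is the statement's notion of a partially-labeled structure being fixed. Once these are in place, \cref{thm:burnside.gen} supplies the averaging and the symmetric-function computation is identical to the one underlying \cref{thm:polyaspec}.
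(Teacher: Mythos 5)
Your proposal is correct and takes essentially the same route as the paper's proof: the same commuting actions of $\Gamma$ and $\symgp{n}$ on colored $F$-structures, the same factoring of the fixed-point count into a structure part and a coloring part identified with coefficients of $\eta\pbrac{p_{\sigma}}$, and the same appeal to \cref{thm:burnside.gen}. The only (cosmetic) difference is that you restrict to profile-$\pi$ colorings and apply the lemma with the trivial weight, extracting the coefficient of $x_{\pi}$ before averaging, whereas the paper keeps all colorings with the monomial weight $W\pbrac{T, c} = \prod_{i} x_{c(i)}$ and extracts the coefficient at the end.
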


\begin{proof}
  Following the notions introduced in \cref{def:plstruct}, we let $\symgp{n}$ act on $\poscols{n}$ and thus on $F \transport{n} \times \poscols{n}$.
  Let $\Gamma$ act trivially on $\poscols{n}$; then $\Gamma$ acts on $F \transport{n} \times \poscols{n}$ also, and the actions of $\Gamma$ and $\symgp{n}$ commute.

  Let $P$ denote the $\fieldname{Q}$-module of formal power series in the countably infinite family of variables $x_{1}, x_{2}, \dots$.
  Define a weight function $W: F \transport{n} \times \poscols{n} \to P$ by $W \pbrac*{T, c} = \prod_{i \in \intset{n}} x_{c(i)}$.
  It is clear that $W$ is constant on $\pbrac*{\Gamma \times \symgp{n}}$-orbits.

  Fix some $\gamma \in \Gamma$.
  By \cref{thm:burnside.gen}, the sum of the weights of all the $\symgp{n}$-orbits fixed by $\gamma$ in $F \transport{n} \times \poscols{n}$ is given by
  \begin{equation}
    \label{eq:burnside.gen.gspec.outer}
    \Fix_{\pbrac*{F \transport{n} \times \poscols{n}} / \symgp{n}} (\gamma) = \frac{1}{n!} \sum_{\sigma \in \symgp{n}} \Fix_{F \transport{n} \times \poscols{n}} \pbrac*{\gamma, \sigma}.
  \end{equation}
  For a given $\sigma \in \symgp{n}$, it is clear that a pair $\pbrac*{T, c} \in F \transport{n} \times \poscols{n}$ is fixed by $\pbrac*{\gamma, \sigma}$ if and only if the $F$-structure $T$ and the coloring $c$ are fixed separately.

  For a coloring $c \in \poscols{n}$ to be fixed by $\pbrac*{\gamma, \sigma}$, it must be fixed by $\sigma$.
  This occurs if and only if $c$ is constant on each orbit of $\sigma$ in $\intset{n}$, so the sum of all the weights of the colorings $c$ fixed by $\sigma$ is exactly $\eta \pbrac*{p_{\sigma}}$.
  Let $\fix \pbrac*{\gamma, \sigma}$ denote the number of $F \transport{n}$-structures fixed by $\pbrac*{\gamma, \sigma}$.
  Then the sum of the weights of the fixed points of $\pbrac*{\gamma, \sigma}$ in $F \transport{n} \times \poscols{n}$ is given by
  \begin{equation}
    \label{eq:burnside.gen.gspec.inner}
    \Fix_{F \transport{n} \times \poscols{n}} \pbrac*{\gamma, \sigma} = \fix \pbrac*{\gamma, \sigma} \eta \pbrac*{p_{\sigma}}.
  \end{equation}

  Combining \cref{eq:burnside.gen.gspec.outer,eq:burnside.gen.gspec.inner}, we have
  \begin{equation}
    \label{eq:burnside.gen.gspec}
    \Fix_{\pbrac*{F \transport{n} \times \poscols{n}} / \symgp{n}} (\gamma) = \frac{1}{n!} \sum_{\sigma \in \symgp{n}} \fix \pbrac*{\gamma, \sigma} \eta \pbrac*{p_{\sigma}}.
  \end{equation}
  The desired result follows from summing over all $n$ in \cref{eq:burnside.gen.gspec}.
\end{proof}

We note that \cref{thm:polyaspec} is an immediate consequence of \cref{thm:polyagspec}, providing a proof which does not require \cref{thm:cisrel}.
It is also natural to extend \cref{thm:specgf} to the $\Gamma$-species context.

\begin{theorem}
  \label{thm:gspecgf}
  Let $F$ be a $\Gamma$-species and let $\gamma \in \Gamma$.
  Denote by $F_{\gamma} (x)$ the exponential generating function of labeled $F$-structures fixed by $\gamma$ and by $\tilde{F}_{\gamma} (x)$ the ordinary generating function of unlabeled $F$-structures fixed by $\gamma$.
  Then we have the following identities of formal power series:
  \begin{subequations}
    \begin{gather}
      F_{\gamma} (x) = \gcieltvars{\Gamma}{F}{\gamma}{x, 0, 0, \dots} \label{eq:gciegf} \\
      \tilde{F}_{\gamma} (x) = \gcieltvars{\Gamma}{F}{\gamma}{x, x^{2}, x^{3}, \dots} \label{eq:gciogf}
    \end{gather}
  \end{subequations}
\end{theorem}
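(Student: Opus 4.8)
The plan is to mirror the proof of \cref{thm:specgf} in the actionless case, replacing every count of fixed $F$-structures by the corresponding count of $F$-structures fixed by $\gamma$; the one genuinely new ingredient is that the actions of $\Gamma$ and $\symgp{n}$ on $F \transport{n}$ commute, so that \cref{thm:burnside.gen} is available. Both identities will follow by a direct specialization of the defining sum \cref{eq:gcidefperm} for $\gcielt{\Gamma}{F}{\gamma}$, together with the identification (noted before \cref{thm:specgf}) of labeled structures of order $n$ with profile $\abrac{1, 1, \dots, 1}$ and unlabeled structures of order $n$ with profile $\abrac{n}$.

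For \cref{eq:gciegf} I would substitute $p_{1} = x$ and $p_{i} = 0$ for $i \geq 2$ directly into \cref{eq:gcidefperm}. The monomial $p_{\sigma} = p_{1}^{\sigma_{1}} p_{2}^{\sigma_{2}} \dots$ vanishes unless $\sigma$ consists entirely of fixed points, so only the identity of each $\symgp{n}$ survives, contributing $\frac{1}{n!} \fix \pbrac*{\gamma_{\intset{n}}} x^{n}$. Since $F \sbrac{\mathrm{id}}$ is the identity map, $\fix \pbrac*{\gamma_{\intset{n}}}$ is exactly the number of labeled $F \transport{n}$-structures fixed by $\gamma$; summing over $n$ then yields the exponential generating function $F_{\gamma}(x)$. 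No appeal to Burnside's lemma is needed in this case.

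For \cref{eq:gciogf} I would instead substitute $p_{i} = x^{i}$ into \cref{eq:gcidefperm}. Because $\sum_{i} i \sigma_{i} = n$ for every $\sigma \in \symgp{n}$, each monomial $p_{\sigma}$ collapses to $x^{n}$ regardless of the cycle type, so the coefficient of $x^{n}$ becomes the average $\frac{1}{n!} \sum_{\sigma \in \symgp{n}} \fix \pbrac*{\gamma_{\intset{n}} \cdot F \sbrac{\sigma}}$. Applying \cref{thm:burnside.gen} with $G = \gpgen{\gamma}$, $H = \symgp{n}$, $X = F \transport{n}$, and trivial weight, this average is precisely $\Fix_{F \transport{n} / \symgp{n}} \pbrac*{\gamma}$, the number of $\symgp{n}$-orbits of $F \transport{n}$-structures fixed by $\gamma$ --- that is, the number of unlabeled $F$-structures of order $n$ fixed by $\gamma$. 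Summing over $n$ produces the ordinary generating function $\tilde{F}_{\gamma}(x)$. (Alternatively, one may read off the same coefficient from \cref{thm:polyagspec} at profile $\abrac{n}$, after noting that the substitution $p_{i} = x^{i}$ is the single-variable specialization $x_{1} = x$, $x_{j} = 0$ for $j \geq 2$ of $\eta$.)

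The calculations themselves are routine; the main point requiring care is the orbit-counting step in the unlabeled case, where the $\Gamma$-action genuinely enters. One must verify that the commuting actions of $\Gamma$ and $\symgp{n}$ justify the hypotheses of \cref{thm:burnside.gen}, so that the $\sigma$-averaged fixed-point count is correctly reinterpreted as a count of $\gamma$-fixed orbits rather than merely as an abstract sum. Once this is in place, both identities specialize at $\gamma = e$ to the classical statements of \cref{thm:specgf}, as expected.
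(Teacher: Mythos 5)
Your proposal is correct, but it takes a genuinely different route from the paper's. The paper derives both identities as quick corollaries of \cref{thm:polyagspec}: it identifies labeled $\gamma$-fixed structures of order $n$ with partially-labeled $\gamma$-fixed structures of profile $\abrac{1, 1, \dots, 1}$ and unlabeled ones with profile $\abrac{n}$, then reads off the coefficient of $x_{1} x_{2} \dots x_{n}$ (which can only arise from $\eta \pbrac{p_{\sigma}}$ when $\sigma$ is the identity, hence equals the coefficient of $p_{1}^{n}$) and the coefficient of $x_{1}^{n}$ (to which \emph{every} $p_{\sigma}$ contributes, hence equals the sum of all degree-$n$ coefficients) in $\eta \pbrac*{\gcielt{\Gamma}{F}{\gamma}}$. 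You instead work directly from the definition \cref{eq:gcidefperm}: your proof of \cref{eq:gciegf} is a bare substitution in which only $\sigma = \mathrm{id}$ survives, needing no orbit counting at all, and for \cref{eq:gciogf} you inline the Burnside step, applying \cref{thm:burnside.gen} with $G = \gpgen{\gamma}$, $H = \symgp{n}$, $X = F \transport{n}$, and trivial weight to convert the $\sigma$-averaged fixed-point count into a count of $\gamma$-fixed $\symgp{n}$-orbits; the commuting-actions hypothesis does hold, as you say, because $\gamma$ acts by species automorphisms. In effect you have unpacked the Burnside argument that the paper performed once, inside the proof of \cref{thm:polyagspec}, which makes your proof more self-contained and renders the labeled case essentially trivial; the paper's route buys uniformity, with both formulas dropping out of a single theorem and exhibiting the labeled/unlabeled dichotomy as the two extreme profiles, exactly as in the classical \cref{thm:specgf}. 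Your parenthetical alternative for the unlabeled case --- reading the coefficient off \cref{thm:polyagspec} at profile $\abrac{n}$ after observing that $p_{i} \mapsto x^{i}$ is the specialization $x_{1} = x$, $x_{j} = 0$ for $j \geq 2$ of $\eta$ --- is precisely the paper's argument, so you have in fact recorded both proofs.
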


\begin{proof}
  There is a natural bijection between labeled $\gamma$-fixed $F$-structures with $n$ vertices and partially-labeled $\gamma$-fixed $F$-structures of profile $\abrac{1, 1, \dots, 1}$.
  Thus, the number of labeled $\gamma$-fixed $F$-structures with $n$ vertices is the coefficient of $x_{1} x_{2} \dots x_{n}$ in $\eta \pbrac*{\gcielt{\Gamma}{F}{\gamma}}$.
  Such a term can only appear in $\eta \pbrac*{p_{\sigma}}$ if $\sigma$ is the identity permutation, so this is equal to the coefficient of $p_{1}^{n}$.
  \Cref{eq:gciegf} follows.

  Similarly, there is a natural bijection between unlabeled $\gamma$-fixed $F$-structures with $n$ vertices and partially-labeled $\gamma$-fixed $F$-structures of profile $\abrac{n}$.
  Thus, the number of unlabeled $\gamma$-fixed $F$-structures with $n$ vertices is the coefficient of $x_{1}^{n}$ in $\eta \pbrac*{\gcielt{\Gamma}{F}{\gamma}}$.
  \emph{Every} symmetric function $p_{\sigma}$ contributes such a term, so this is the sum of \emph{all} the coefficients on terms of degree $n$ in $\gcielt{\Gamma}{F}{\gamma}$.
  \Cref{eq:gciogf} follows.
\end{proof}

\begin{observation}
  Crucially, each of $F_{\gamma} (x)$ and $\tilde{F}_{\gamma} (x)$ counts structures which are fixed by $\gamma$ \emph{with respect to their partial labelings}.
  Thus, $F_{\gamma} (x)$ counts only those labeled $F$-structures which are fixed \emph{as labeled structures}, while $\tilde{F}_{\gamma} (x)$ counts unlabeled $F$-structures which are fixed \emph{as unlabeled structures}.
\end{observation}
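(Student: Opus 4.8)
The plan is to prove the observation by unpacking, directly from \cref{def:plstruct}, what it means for a partially-labeled $F$-structure to be fixed by $\gamma$, and then specializing the resulting criterion to the two extreme profiles $\abrac{1, 1, \dots, 1}$ and $\abrac{n}$ that appear in the proof of \cref{thm:gspecgf}. Throughout, I would lean on the two facts arranged in that proof: that $\Gamma$ acts trivially on colorings, and that the induced permutation $\gamma_{\intset{n}}$ of $F \transport{n}$ commutes with every transport $F \sbrac{\sigma}$, so that the $\Gamma$-action descends to the $\symgp{n}$-orbits which constitute partially-labeled structures.

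First I would fix the profile $\pi = \abrac{\pi_{1}, \dots, \pi_{k}}$ and recall that a partially-labeled $F$-structure of profile $\pi$ is the $\symgp{n}$-orbit of a pair $\pbrac{T, c_{\pi}} \in F \transport{n} \times \poscols{n}$, where $\symgp{n}$ acts on both coordinates and where $\gamma$ sends this orbit to the orbit of $\pbrac{\gamma_{\intset{n}} (T), c_{\pi}}$. The central step is then to characterize fixedness: the orbit is fixed by $\gamma$ precisely when $\pbrac{\gamma_{\intset{n}} (T), c_{\pi}}$ lies in the $\symgp{n}$-orbit of $\pbrac{T, c_{\pi}}$, i.e.\ when there exists $\sigma \in \symgp{n}$ with $F \sbrac{\sigma} (T) = \gamma_{\intset{n}} (T)$ and $\sigma \cdot c_{\pi} = c_{\pi}$. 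Since the stabilizer of $c_{\pi}$ in $\symgp{n}$ is exactly the Young subgroup $\symgp{\pi} = \symgp{\pi_{1}} \times \dots \times \symgp{\pi_{k}}$, this says that the structure is $\gamma$-fixed if and only if $\gamma_{\intset{n}} (T) = F \sbrac{\sigma} (T)$ for some $\sigma \in \symgp{\pi}$. This single criterion---in which the admissible \enquote{readjustments} $\sigma$ are forced into the Young subgroup precisely because $\gamma$ leaves the coloring untouched---is the crux of the observation, and I expect it to be the main obstacle, since it requires carefully tracking the interaction of the two-coordinate $\symgp{n}$-action with the color-fixing constraint rather than any computation.

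Finally I would specialize. For the profile $\abrac{1, 1, \dots, 1}$ the coloring $c_{\pi}$ is injective, so its stabilizer $\symgp{\pi}$ is trivial and the partially-labeled structures of this profile are in canonical bijection with the labeled structures $T \in F \transport{n}$; the criterion collapses to $\gamma_{\intset{n}} (T) = T$, which is exactly fixedness \emph{as a labeled structure}, giving $F_{\gamma} (x)$. For the profile $\abrac{n}$ the coloring $c_{\pi}$ is constant, so its stabilizer is all of $\symgp{n}$ and the partially-labeled structures of this profile are in bijection with the unlabeled structures, namely the $\symgp{n}$-orbits of $F \transport{n}$; the criterion becomes the statement that $\gamma_{\intset{n}} (T)$ lies in the $\symgp{n}$-orbit of $T$, which is exactly fixedness \emph{as an unlabeled structure}, giving $\tilde{F}_{\gamma} (x)$. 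The concluding conceptual point---that these two notions are genuinely different, with as-labeled fixedness strictly the stronger---then reads off the inclusion of Young subgroups $\cbrac{e} \subseteq \symgp{\pi} \subseteq \symgp{n}$, since enlarging the set of admissible $\sigma$ can only enlarge the set of $\gamma$-fixed structures.
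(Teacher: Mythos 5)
Your proposal is correct and follows essentially the same route as the paper: the observation carries no separate proof there, being a gloss on the bijections asserted in the proof of \cref{thm:gspecgf} (labeled structures correspond to profile $\abrac{1, 1, \dots, 1}$, unlabeled structures to profile $\abrac{n}$), and your explicit criterion---that the $\symgp{n}$-orbit of $\pbrac{T, c_{\pi}}$ is $\gamma$-fixed if and only if $\gamma_{\intset{n}}(T) = F\sbrac{\sigma}(T)$ for some $\sigma$ in the Young subgroup stabilizing $c_{\pi}$---is precisely the mechanism underlying those bijections. Making the stabilizer computation explicit is a clarifying elaboration of the paper's implicit reasoning, not a different approach.
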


\section{Algebra of $\Gamma$-species}
\label{sec:gspeciesalg}

\subsection{Species quotients}
\label{sec:specquot}
Classical species-theoretic enumeration uses the cycle index series to \enquote{keep the books} on the actions of symmetric groups on the labels of combinatorial structures, then apply Burnside's lemma to take quotients at an appropriate time.
$\Gamma$-species theory extends this practice, using the $\Gamma$-cycle index series to analogously \enquote{keep the books} on these actions and some structural group action \emph{simultaneously}, then apply Burnside's lemma to one or both as appropriate.
\begin{lemma}[{\cite[Thm.~1.5.11]{argthesis}}]
  \label{thm:gciquot}
  Let $F$ be a $\Gamma$-species.
  Then we have
  \begin{equation}
    \label{eq:ciquot}
    \ci{\quot{F}{\Gamma}} = \frac{1}{\abs{\Gamma}} \sum_{\gamma \in \Gamma} \gcielt{\Gamma}{F}{\gamma}.
  \end{equation}
  where $\ci{\quot{F}{\Gamma}}$ is the classical cycle index of the quotient species $\quot{F}{\Gamma}$ and $\gci{\Gamma}{F}$ is the $\Gamma$-cycle index of $F$.
\end{lemma}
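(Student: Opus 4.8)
The plan is to compute $\fix \pbrac*{\quot{F}{\Gamma} \sbrac{\sigma}}$ for each $n$ and each $\sigma \in \symgp{n}$, reduce it to a sum over $\Gamma$ via the generalized Burnside lemma (\cref{thm:burnside.gen}), and then substitute into the definition of the cycle index. The key structural observation is that, for a fixed label set $\intset{n}$, the two groups $\symgp{n}$ (acting through the transports $F \sbrac{\sigma}$) and $\Gamma$ (acting through the induced permutations $\gamma_{\intset{n}}$) have \emph{commuting} actions on the finite set $F \sbrac{\intset{n}}$; this commutativity is precisely the condition that makes $F$ a $\Gamma$-species. Recall that the structures of $\quot{F}{\Gamma}$ over $\intset{n}$ are exactly the $\Gamma$-orbits in $F \sbrac{\intset{n}}$, and that the transport $\quot{F}{\Gamma} \sbrac{\sigma}$ is the permutation of these orbits induced by $F \sbrac{\sigma}$.

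First I would identify $\fix \pbrac*{\quot{F}{\Gamma} \sbrac{\sigma}}$ combinatorially. Because the $\Gamma$- and $\symgp{n}$-actions commute, $F \sbrac{\sigma}$ carries $\Gamma$-orbits to $\Gamma$-orbits; hence a $\Gamma$-orbit $\omega$ is fixed by $\quot{F}{\Gamma} \sbrac{\sigma}$ exactly when $F \sbrac{\sigma}$ sends the set $\omega$ to itself. Thus $\fix \pbrac*{\quot{F}{\Gamma} \sbrac{\sigma}}$ is the number of $\Gamma$-orbits in $F \sbrac{\intset{n}}$ fixed by $\sigma$, i.e.\ $\Fix_{F \sbrac{\intset{n}} / \Gamma} (\sigma)$ computed with the trivial (counting) weight $W \equiv 1$.

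Next I would invoke \cref{thm:burnside.gen} with $G = \symgp{n}$, $H = \Gamma$, $X = F \sbrac{\intset{n}}$, and $W \equiv 1$. This yields
\begin{equation*}
  \fix \pbrac*{\quot{F}{\Gamma} \sbrac{\sigma}} = \frac{1}{\abs{\Gamma}} \sum_{\gamma \in \Gamma} \fix \pbrac*{\gamma_{\intset{n}} \cdot F \sbrac{\sigma}},
\end{equation*}
since the joint fixed points of the pair $\pbrac*{\sigma, \gamma}$ are exactly the $F$-structures fixed by the single composite permutation $\gamma_{\intset{n}} \cdot F \sbrac{\sigma}$ (first transport by $\sigma$, then apply $\gamma_{\intset{n}}$). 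Finally I would substitute this into the definition of $\ci{\quot{F}{\Gamma}}$ in \cref{eq:cidef}, pull the factor $1/\abs{\Gamma}$ and the sum over $\gamma$ outside the sums over $n$ and $\sigma$, and recognize the resulting inner series, for each fixed $\gamma$, as $\gcielt{\Gamma}{F}{\gamma}$ from \cref{eq:gcidefperm}.

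The only step requiring genuine care—and the main obstacle—is the bookkeeping of the commuting actions in the application of the generalized Burnside lemma: one must verify that being ``fixed by the pair $\pbrac*{\sigma, \gamma}$'' in the sense of \cref{thm:burnside.gen} coincides with being fixed by the composite permutation $\gamma_{\intset{n}} \cdot F \sbrac{\sigma}$, and that passing to $\Gamma$-orbits turns ``set-fixed by $F \sbrac{\sigma}$'' into ``fixed as a point of $\quot{F}{\Gamma} \sbrac{\sigma}$''. Both hinge on the commutativity guaranteed by the $\Gamma$-species structure; once this is in place, the remainder is a routine interchange of summation order in formal power series.
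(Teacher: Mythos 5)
Your proposal is correct and follows the same route the paper intends: the paper omits the proof of \cref{thm:gciquot}, noting only that it is \enquote{essentially a careful application of Burnside's Lemma} with full details in \cite{argthesis}, and your argument—applying \cref{thm:burnside.gen} with trivial weight to the commuting actions of $\symgp{n}$ and $\Gamma$ on $F \sbrac{\intset{n}}$, then summing into the cycle index definition—is precisely that application. Your attention to the two bookkeeping points (fixed-by-pair versus fixed-by-composite, and setwise stabilization of orbits versus fixed points of the quotient transport) is exactly the \enquote{care} the paper alludes to.
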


(The proof of \cref{thm:gciquot} is essentially a careful application of Burnside's Lemma and appears in full in \cite{argthesis}.)

It remains, of course, to develop an algebraic theory facilitating the computation of $\Gamma$-cycle indices analogous to that available for classical cycle indices.
Fortunately, this is not difficult.
Each of the standard species operators $+$, $-$, $\cdot$, $\circ$, $\square$, $\pointed{}$, and $'$ has a natural analogue for $\Gamma$-species, corresponding to a suitable operation on $\Gamma$-cycle indices.

With the exception of $\circ$ and $\square$, the definitions of these $\Gamma$-species operations and their associated $\Gamma$-cycle index operations are completely natural, so we omit them here.
However, the two composition operators are more subtle.

\subsection{Plethystic composition of $\Gamma$-species}
\label{sec:plethgspec}

In the classical setting, if $F$ and $G$ are species, an $\pbrac{F \circ G}$-structure is an \enquote{$F$-structure of $G$-structures}.
Extending this to the $\Gamma$-species setting does not require changing our understanding of the structures; the difficulty is in making sense of how an element $\gamma$ of $\Gamma$ should act on such a structure.

Consider a schematic $\pbrac{F \circ G}$-structure, as in \cref{fig:fgschem}.

\begin{figure}[htbp]
  \centering
  \begin{tikzpicture}
    \node [invisinode] (A) at (0,0) {$F$};

    \node [invisinode] (B) at (-2,-1) {$G$};
    \node [cfnode] (b1) at (-2.5,-2) {};
    \node [cfnode] (b2) at (-1.5,-2) {};

    \node [invisinode] (C) at (0,-1) {$G$};
    \node [cfnode] (c1) at (-.5,-2) {};
    \node [cfnode] (c2) at (.5,-2) {};

    \node [invisinode] (D) at (2,-1) {$G$};
    \node [cfnode] (d1) at (1.5,-2) {};
    \node [cfnode] (d2) at (2.5,-2) {};

    \draw
    (A) -- (B) (B) -- (b1) (B) -- (b2)
    (A) -- (C) (C) -- (c1) (C) -- (c2)
    (A) -- (D) (D) -- (d1) (D) -- (d2);

    \node [dashbox, fit=(B) (C) (D)] {};
    \node [dashbox, fit=(b1) (b2)] {};
    \node [dashbox, fit=(c1) (c2)] {};
    \node [dashbox, fit=(d1) (d2)] {};
  \end{tikzpicture}
  \caption{A schematic of an $\pbrac{F \circ G}$-structure}
  \label{fig:fgschem}
\end{figure}
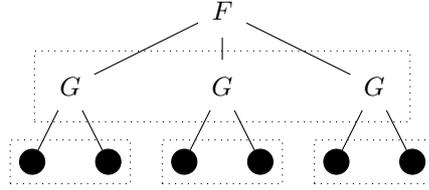

The \enquote{parent $F$-structure} and each of the \enquote{descendant $G$-structures} is modified in some way by the action of a particular element $\gamma \in \Gamma$.
To obtain a action of $\gamma \in \Gamma$ on the aggregate $\pbrac{F \circ G}$-structure, we can simply apply $\gamma$ to each of the descendant $G$-structures independently and then to the parent $F$-structure, as illustrated in \cref{fig:fgschemact}.

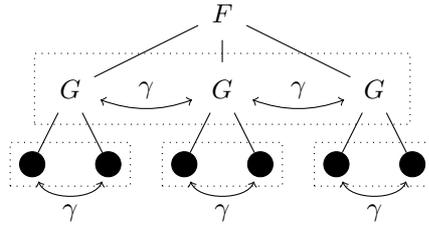
\begin{figure}[htbp]
  \centering
  \begin{tikzpicture}
    \node [invisinode] (A) at (0,0) {$F$};

    \node [invisinode] (B) at (-2,-1) {$G$};
    \node [cfnode] (b1) at (-2.5,-2) {};
    \node [cfnode] (b2) at (-1.5,-2) {};

    \node [invisinode] (C) at (0,-1) {$G$};
    \node [cfnode] (c1) at (-.5,-2) {};
    \node [cfnode] (c2) at (.5,-2) {};

    \node [invisinode] (D) at (2,-1) {$G$};
    \node [cfnode] (d1) at (1.5,-2) {};
    \node [cfnode] (d2) at (2.5,-2) {};

    \draw
    (A) -- (B) (B) -- (b1) (B) -- (b2)
    (A) -- (C) (C) -- (c1) (C) -- (c2)
    (A) -- (D) (D) -- (d1) (D) -- (d2);

    \node [dashbox, fit=(B) (C) (D)] {};
    \node [dashbox, fit=(b1) (b2)] {};
    \node [dashbox, fit=(c1) (c2)] {};
    \node [dashbox, fit=(d1) (d2)] {};

    \path [<->, every edge/.append style={bend right=70, shorten <= 2pt, shorten >= 2pt}]
    (b1) edge node [below] {$\gamma$} (b2)
    (c1) edge node [below] {$\gamma$} (c2)
    (d1) edge node [below] {$\gamma$} (d2);

    \path [<->, every edge/.append style={bend right=20, shorten <= 2pt, shorten >= 2pt}]
    (B) edge node [above] {$\gamma$} (C)
    (C) edge node [above] {$\gamma$} (D);
  \end{tikzpicture}
  \caption{A schematic of an $\pbrac{F \circ G}$-structure with an action of $\gamma \in \Gamma$}
  \label{fig:fgschemact}
\end{figure}

It is shown in \cite[\S 4]{hend} that there is a corresponding operation $\circ$, the \enquote{$\Gamma$-cycle index plethysm}, given by
\begin{multline}
  \label{eq:gcipleth}
  \gci{\Gamma}{F \circ G} = \pbrac*{\gci{\Gamma}{F} \circ \gci{\Gamma}{G}} \pbrac{\gamma} \sbrac{p_{1}, p_{2}, p_{3}, \dots} = \\
  \gcielt{\Gamma}{F}{\gamma} \sbrac*{ \gcielt{\Gamma}{G}{\gamma} \sbrac{p_{1}, p_{2}, p_{3}, \dots}, \gcielt{\Gamma}{G}{\gamma^{2}} \sbrac{p_{2}, p_{4}, p_{6}, \dots}, \gcielt{\Gamma}{G}{\gamma^{3}} \sbrac{p_{3}, p_{6}, p_{9}, \dots}, \dots }.
\end{multline}

It is crucial to note a subtle point in \cref{eq:gcipleth}: the $\gci{\Gamma}{G}$ terms are evaluated at \emph{different powers of $\gamma$}.
To see why, recall that the coefficients of $\gcielt{\Gamma}{F}{\gamma}$ count $F$-structures that are \emph{fixed} by the combined action of $\gamma$ and some $\sigma \in \symgp{n}$.
In \cref{eq:gcipleth}, each evaluation of $\gci{\Gamma}{G}$ which is substituted for $p_{i}$ corresponds to a $G$-structure which is set into a $i$-cycle of the parent $F$-structure; if the overall $\pbrac{F \circ G}$-structure is to be fixed by $\gamma$, this descendant $G$-structure must be returned to itself after it moves around its $i$-cycle, which results in an application of $\gamma^{i}$.
Thus, the appropriate $\Gamma$-cycle index to substitute is $\gcielt{\Gamma}{G}{\gamma^{i}}$.

The operation $\circ$ is implemented by the \code{composition()} method of the \code{GroupCycleIndexSeries} class in Sage \cite{sage}.
We will demonstrate its use in \cref{sec:gspecenum}.

\subsection{Functorial composition of $\Gamma$-species}
\label{sec:funccompgspec}
Since a combinatorial species is a functor $\catname{FinSet} \to \catname{FinBij}$ and thus can be lifted to a functor $\catname{FinSet} \to \catname{FinSet}$, it is at least algebraically meaningful to consider the composition \emph{as functors} of two species.
This operation yields the \enquote{fuctorial composition} $F \funccomp G$.
An $\pbrac{F \funccomp G}$-structure on a label set $A$ is an $F$-structure on the set $G \sbrac{A}$ of $G$-structures labeled by $A$.
Although this operation is not as combinatorially natural as the plethystic composition $\circ$, it nevertheless is useful for certain constructions; for example, letting $\specsubs$ denote the species of subsets (i.e. $\specsubs = \specset \cdot \specset$) and $\specgraph$ the species of simple graphs, we have $\specgraph = \specsubs \funccomp \specsubs[2]$.

Since a $\Gamma$-species is formally a functor $\Gamma \to \catname{Spec}$, it is not meaningful to compose two $\Gamma$-species as functors.
However, if $F$ and $G$ are $\Gamma$-species, we can consider the functorial composition $F \funccomp G$ of their underlying classical species.
Each $\gamma \in \Gamma$ induces a permutation on the set $G \sbrac{A}$ of $G$-structures which commutes with the action of $\symgp{A}$.

Therefore, we can obtain a structural action of $\Gamma$ on $F \funccomp G$ in the following manner.
Consider a structure $s \in \pbrac{F \funccomp G} \sbrac{A}$ and fix an element $\gamma \in \Gamma$.
$s$ consists of an $F$-structure whose labels are all the $G$-structures in $G \sbrac{A}$.
Replace each $G$-structure with its image under $\gamma$, then apply $\gamma$ to the parent $F$-structure.
The result is a new $\pbrac{F \funccomp G}$-structure over $A$, since $\gamma$ must act by a bijection on $G \sbrac{A}$ and $F \sbrac{G \sbrac{A}}$.
That this action commutes with label permutations is clear.

Therefore, $F \funccomp G$ is in fact a $\Gamma$-species.
In light of the relationship to the functorial composition of classical species, we dub this the \emph{functorial composition of $F$ and $G$}, although (as previously noted) the composition of $F$ and $G$ as functors is not in fact well-defined.

It remains to compute the $\Gamma$-cycle index of the functorial composition of two $\Gamma$-species.
By definition,
\[ \gcielt{\Gamma}{F \funccomp G}{\gamma} = \sum_{n \geq 0} \frac{1}{n!} \sum_{\sigma \in \symgp{n}} \fix \pbrac*{ \gamma \cdot F \sbrac*{ \gamma \cdot G \sbrac*{\sigma} } } p_{\sigma}, \]
so we need only compute the values $\fix \pbrac*{ \gamma \cdot F \sbrac*{ \gamma \cdot G \sbrac*{\sigma} } }$ for each $\gamma \in \Gamma$ and $\sigma \in \symgp{n}$.
Since $\gamma \cdot G \sbrac*{\sigma}$ is a permutation on $G \sbrac*{n}$, this value already occurs as a coefficient in $\gcielt{\Gamma}{F}{\gamma}$.
We therefore take the following definition.

\begin{definition}
  \label{def:gcifunccomp}
  Let $F$ and $G$ be $\Gamma$-species.
  The \emph{functorial composite} $\gci{\Gamma}{F} \funccomp \gci{\Gamma}{G}$ of their $\Gamma$-cycle indices is the $\Gamma$-cycle index given by
  \begin{equation}
    \label{eq:gcifunccomp}
    \pbrac*{\gci{\Gamma}{F} \funccomp \gci{\Gamma}{G}} \pbrac{\gamma} = \sum_{n \geq 0} \frac{1}{n!} \sum_{\sigma \in \symgp{n}} \fix \pbrac*{\gamma \cdot F \sbrac*{\gamma \cdot G \sbrac*{\sigma}}} p_{\sigma}.
  \end{equation}
\end{definition}

That this corresponds to $\Gamma$-species functorial composition follows immediately.

\begin{theorem}
  \label{thm:gcifunccomp}
  Let $F$ and $G$ be $\Gamma$-species.
  The $\Gamma$-cycle index of their functorial composition is given by
  \begin{equation}
    \label{eq:gcifunccompthm}
    \gci{\Gamma}{F \funccomp G} = \gci{\Gamma}{F} \funccomp \gci{\Gamma}{G}.
  \end{equation}
\end{theorem}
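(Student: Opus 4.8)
The plan is to show that the formula of \cref{def:gcifunccomp} reproduces, term by term, the defining expression \cref{eq:gcidefperm} for the $\Gamma$-cycle index of $F \funccomp G$. Both
\[
  \gcielt{\Gamma}{F \funccomp G}{\gamma} = \sum_{n \geq 0} \frac{1}{n!} \sum_{\sigma \in \symgp{n}} \fix \pbrac*{\gamma_{\intset{n}} \cdot \pbrac{F \funccomp G} \sbrac{\sigma}} p_{\sigma}
\]
and the right-hand side of \cref{eq:gcifunccompthm} are sums of the shape $\frac{1}{n!} \fix(\cdots) p_{\sigma}$ over $n$ and $\sigma \in \symgp{n}$, so it suffices to prove, for each fixed $n$ and each $\sigma \in \symgp{n}$, the equality of endofunctions of the set $\pbrac{F \funccomp G} \transport{n} = F \sbrac{G \transport{n}}$:
\[
  \gamma_{\intset{n}} \cdot \pbrac{F \funccomp G} \sbrac{\sigma} = \gamma \cdot F \sbrac{\gamma \cdot G \sbrac{\sigma}}.
\]
Counting fixed points on both sides then yields the theorem immediately.

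First I would unpack the left-hand side using the construction of \cref{sec:funccompgspec}. By the definition of functorial composition, the label transport $\pbrac{F \funccomp G} \sbrac{\sigma}$ is the permutation $F \sbrac{G \sbrac{\sigma}}$ of $F \sbrac{G \transport{n}}$ induced by the permutation $G \sbrac{\sigma}$ of the label set $G \transport{n}$. The $\gamma$-action $\gamma_{\intset{n}} \cdot$ proceeds in two stages: it first replaces each descendant $G$-structure by its image under $\gamma$, which relabels the parent $F$-structure along the permutation $\gamma_{G \transport{n}}$ of $G \transport{n}$ and hence applies $F \sbrac{\gamma_{G \transport{n}}}$; it then applies $\gamma$ to the parent $F$-structure, i.e.\ $\gamma_{F \sbrac{G \transport{n}}}$. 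Following the order convention of \cref{eq:gcidefperm} (the label permutation acts first, then $\gamma$), the composite endofunction is
\[
  \gamma_{\intset{n}} \cdot \pbrac{F \funccomp G} \sbrac{\sigma} = \gamma_{F \sbrac{G \transport{n}}} \circ F \sbrac{\gamma_{G \transport{n}}} \circ F \sbrac{G \sbrac{\sigma}}.
\]

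The key step is then to collapse the two consecutive $F$-transports by functoriality of $F$: $F \sbrac{\gamma_{G \transport{n}}} \circ F \sbrac{G \sbrac{\sigma}} = F \sbrac{\gamma_{G \transport{n}} \circ G \sbrac{\sigma}}$. The inner composite $\gamma_{G \transport{n}} \circ G \sbrac{\sigma}$ is precisely the permutation of $G \transport{n}$ that the notation of \cref{def:gcifunccomp} records as $\gamma \cdot G \sbrac{\sigma}$. Substituting gives $\gamma_{F \sbrac{G \transport{n}}} \circ F \sbrac{\gamma \cdot G \sbrac{\sigma}} = \gamma \cdot F \sbrac{\gamma \cdot G \sbrac{\sigma}}$, the endofunction whose fixed points are counted by \cref{eq:gcifunccomp}. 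This is exactly the required term-by-term identity.

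I expect the only genuine obstacle to be notational bookkeeping rather than mathematical depth: one must carefully keep the label transport $G \sbrac{\sigma}$ distinct from the structural relabeling $\gamma_{G \transport{n}}$, and track the order of composition so that the functoriality cancellation lands on the nested expression of \cref{def:gcifunccomp} rather than on a reversed version of it. Once the order conventions of \cref{eq:gcidefperm} are fixed, the single application of functoriality of $F$ is the entire content of the argument; no Burnside-type or generating-function reasoning is needed, since \cref{def:gcifunccomp} was constructed precisely so that the coefficient $\fix \pbrac*{\gamma \cdot F \sbrac{\gamma \cdot G \sbrac{\sigma}}}$ already occurs in $\gcielt{\Gamma}{F}{\gamma}$.
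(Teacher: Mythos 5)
Your proof is correct and takes essentially the same route as the paper: there, \cref{eq:gcifunccompthm} is declared to ``follow immediately'' because \cref{def:gcifunccomp} was written down precisely by unwinding the $\Gamma$-action on $\pbrac{F \funccomp G}$-structures into the combined permutation $\gamma \cdot F \sbrac*{\gamma \cdot G \sbrac*{\sigma}}$ and matching coefficients term by term, with no Burnside-type argument. Your explicit functoriality step $F \sbrac*{\gamma_{G \transport{n}}} \circ F \sbrac*{G \sbrac{\sigma}} = F \sbrac*{\gamma \cdot G \sbrac{\sigma}}$ merely spells out that identification, which is a welcome precision but not a different argument.
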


It remains only to find a formula for the cycle type of the permutation $\gamma \cdot G \sbrac*{\sigma}$ on the set $G \intset{n}$.

\begin{lemma}
  \label{lem:funccompcyctype}
  Let $G$ be a $\Gamma$-species and fix $\gamma \in \Gamma$, $\sigma \in \symgp{n}$, and $k \geq 1$.
  The number of cycles of length $k$ in $\gamma \cdot G \sbrac{\sigma}$ as a permutation of $G \intset{n}$ is then given by
  \begin{equation}
    \label{eq:funccompcyctype}
    \pbrac*{\gamma \cdot G \sbrac{\sigma}}_{k} = \frac{1}{k} \sum_{d \mid k} \mu \pbrac*{\frac{k}{d}} \fix \pbrac*{\gamma^{d} \cdot G \sbrac*{\sigma^{d}}},
  \end{equation}
  where $\mu$ is the integer M\"{o}bius function.
\end{lemma}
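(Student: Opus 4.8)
The plan is to regard $\tau = \gamma \cdot G\sbrac{\sigma}$ as a single permutation of the finite set $G\intset{n}$ and to recover its cycle-type data from the fixed-point counts of its powers by M\"{o}bius inversion. First I would record the elementary fact that, for \emph{any} permutation $\tau$ of a finite set, $\fix(\tau^{m}) = \sum_{d \mid m} d\, c_{d}(\tau)$, where $c_{d}(\tau)$ is the number of $d$-cycles of $\tau$: an element is fixed by $\tau^{m}$ precisely when the length of its $\tau$-cycle divides $m$, and each such cycle of length $d$ contributes exactly $d$ fixed points. Treating $c_{d}(\tau) \mapsto d\, c_{d}(\tau)$ as a function on divisors, this is a divisor-sum identity, so M\"{o}bius inversion yields $k\, c_{k}(\tau) = \sum_{d \mid k} \mu\pbrac*{k/d}\, \fix(\tau^{d})$. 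Since $c_{k}(\tau) = \pbrac*{\gamma \cdot G\sbrac{\sigma}}_{k}$ is exactly the quantity we want, it remains only to identify $\fix(\tau^{d})$ with $\fix\pbrac*{\gamma^{d} \cdot G\sbrac{\sigma^{d}}}$.

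This identification is where the $\Gamma$-species structure enters, and I expect it to be the only substantive step. Because $G$ is a $\Gamma$-species, the permutation $\gamma_{G\intset{n}}$ induced by $\gamma$ on $G\intset{n}$ is a component of a natural transformation (species automorphism) $G \to G$, and therefore commutes with the transport $G\sbrac{\sigma}$; this is precisely the commuting square of \cref{fig:specautdiag} applied to the underlying species. Moreover, $G$ is a functor, so $G\sbrac{\sigma}^{d} = G\sbrac{\sigma^{d}}$, and $\gamma \mapsto \gamma_{G\intset{n}}$ is a group homomorphism (composition in $\Gamma$ maps to composition of natural-transformation components), so $\gamma_{G\intset{n}}^{d} = \pbrac*{\gamma^{d}}_{G\intset{n}}$. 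Using the commutation to interleave the two factors, I would then compute $\tau^{d} = \pbrac*{\gamma_{G\intset{n}} \cdot G\sbrac{\sigma}}^{d} = \gamma_{G\intset{n}}^{d} \cdot G\sbrac{\sigma}^{d} = \gamma^{d} \cdot G\sbrac{\sigma^{d}}$, whence $\fix(\tau^{d}) = \fix\pbrac*{\gamma^{d} \cdot G\sbrac{\sigma^{d}}}$. Substituting into the inverted divisor sum and dividing by $k$ gives the stated formula.

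The main obstacle is genuinely just the commutation $\gamma_{G\intset{n}} \cdot G\sbrac{\sigma} = G\sbrac{\sigma} \cdot \gamma_{G\intset{n}}$ that lets the power $\pbrac*{\gamma \cdot G\sbrac{\sigma}}^{d}$ factor cleanly; the rest is the standard fixed-points-to-cycles M\"{o}bius inversion. I would be careful to invoke this commutation explicitly as an instance of the naturality defining a structural $\Gamma$-action, rather than assume it, since it is exactly what forces the exponent on $\gamma$ to track the exponent on $\sigma$ and hence makes $\gamma^{d} \cdot G\sbrac{\sigma^{d}}$ the correct expression appearing in \cref{eq:funccompcyctype}.
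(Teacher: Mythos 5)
Your proposal is correct and follows essentially the same route as the paper's proof: establish $\fix\pbrac*{\pbrac*{\gamma \cdot G\sbrac{\sigma}}^{k}} = \sum_{d \mid k} d \cdot \pbrac*{\gamma \cdot G\sbrac{\sigma}}_{d}$, apply M\"{o}bius inversion, and then use the commutation of $\gamma$ with $G\sbrac{\sigma}$ together with functorality ($G\sbrac{\sigma}^{d} = G\sbrac{\sigma^{d}}$) to distribute the $d$-th power. Your added care in justifying the commutation via naturality and the homomorphism property $\gamma_{G\intset{n}}^{d} = \pbrac*{\gamma^{d}}_{G\intset{n}}$ simply makes explicit what the paper leaves implicit.
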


The proof is similar to that of \cite[\S 2.2, Prop.~3]{bll}.
We present it here in full for completeness.

\begin{proof}
  We clearly have that
  \begin{equation}
    \label{eq:powerfixed}
    \fix \pbrac*{\pbrac*{\gamma \cdot G \sbrac*{\sigma}}^{k}} = \sum_{d \mid k} d \cdot \pbrac*{\gamma \cdot G \sbrac*{\sigma}}_{d}.
  \end{equation}
  Applying M\"{o}bius inversion to \cref{eq:powerfixed}, we obtain that
  \begin{equation}
    \label{eq:funccompminv}
    \pbrac*{\gamma \cdot G \sbrac*{\sigma}}_{k} = \frac{1}{k} \sum_{d \mid k} \mu \pbrac*{\frac{k}{d}} \fix \pbrac*{\pbrac*{\gamma \cdot G \sbrac*{\sigma}}^{d}}.
  \end{equation}
  Since $\gamma$ commutes with $G \sbrac*{\sigma}$, we can distribute the power of $d$ to the two rightmost terms in \cref{eq:funccompminv}; furthermore, by functorality, $G \sbrac*{\sigma}^{d} = G \sbrac*{\sigma^{d}}$.
  \Cref{eq:funccompcyctype} follows immediately.
\end{proof}

Thus, the cycle type of the permutation $\gamma \cdot G \sbrac{\sigma}$ may be computed using only the values of $\fix \pbrac*{\gamma' \cdot \sbrac*{\sigma'}}$ (allowing $\gamma'$ to range over $\Gamma$ and $\sigma'$ to range over $\symgp{n}$).
This justifies \cref{def:gcifunccomp}, since it implies that $\gci{\Gamma}{F} \funccomp \gci{\Gamma}{G}$ may be computed using only information taken from the coefficients of the $\Gamma$-cycle indices.

This operation is implemented by the \code{functorial\string_composition()} method of the \code{GroupCycleIndexSeries} class in Sage \cite{sage}.
We will demonstrate its use in \cref{sec:gspecenum}.

\section{Multisort $\Gamma$-species}
\label{sec:multisort}
Let $\catname[k]{FinSet}$ denote the category of finite $k$-sort sets (i.e. $k$-tuples of finite sets) whose morphisms are $k$-sort set maps.
A \emph{$k$-sort species} $F$ is then a functor $F: \catname[k]{FinSet} \to \catname{FinBij}$.
(Note that a $1$-sort species is simply a classical combinatorial species.)
$k$-sort species are useful for studying the combinatorics of structures which carry labels on several different \enquote{parts}; a natural example is the $2$-sort species of graphs with one sort of label on vertices and the other on edges.
(The use of $2$-sort set maps corresponds to the combinatorial fact that edge and vertex labels cannot be shuffled with each other.)

The algebraic theories of generating functions and cycle index series and the combinatorial calculus of species may all be extended naturally to the $k$-sort case for each $k$.
This is discussed at length in \cite[\S 2.4]{bll}.
We record here for future reference that the \emph{$k$-sort cycle index} of a $k$-sort species $F$ is given by
\begin{multline}
  \label{eq:kcidef}
  \ci{F} \pbrac*{p_{1, 1}, p_{1, 2}, \dots; p_{2, 1}, p_{2, 2}, \dots; \dots; p_{k, 1}, p_{k, 2}, \dots} = \\
  \sum_{n_{1}, n_{2}, \dots, n_{k} \geq 0} \frac{1}{n_{1}! n_{2}! \dots n_{k}!} \sum_{\sigma_{i} \in \symgp{n_{i}}} \abs*{\fix F \sbrac*{\sigma_{1}, \sigma_{2}, \dots, \sigma_{k}}} \, p_{1, 1}^{\sigma_{1,1}} p_{1, 2}^{\sigma_{1, 2}} \dots p_{k, 1}^{\sigma_{k, 1}} p_{k, 2}^{\sigma_{k, 2}} \dots
\end{multline}
where $p_{i, j}$ are a two-parameter infinite family of indeterminates and $\sigma_{i, j}$ is the number of $j$-cycles of $\sigma_{i}$.

Since a $k$-sort species $F$ admits a group $\Aut \pbrac{F}$ of automorphisms, we can translate the notion of a $\Gamma$-species to the $k$-sort context easily.
Specifically, a \emph{$k$-sort $\Gamma$-species} is a functor $F: \Gamma \to \catname[k]{Spec}$.
(As expected, a $1$-sort $\Gamma$-species is a classical $\Gamma$-species.)
Then the \emph{$k$-sort $\Gamma$-cycle index} of $F$ is given by
\begin{multline}
  \label{eqw:kgcidef}
  \gcielt{\Gamma}{F}{\gamma} \pbrac*{p_{1, 1}, p_{1, 2}, \dots; p_{2, 1}, p_{2, 2}, \dots; \dots; p_{k, 1}, p_{k, 2}, \dots} = \\
  \sum_{n_{1}, n_{2}, \dots, n_{k} \geq 0} \frac{1}{n_{1}! n_{2}! \dots n_{k}!} \sum_{\sigma_{i} \in \symgp{n_{i}}} \abs*{\fix \gamma_{\sbrac{n_{1}}, \dots, \sbrac{n_{k}}} \cdot F \sbrac*{\sigma_{1}, \sigma_{2}, \dots, \sigma_{k}}} \, p_{1, 1}^{\sigma_{1,1}} p_{1, 2}^{\sigma_{1, 2}} \dots p_{k, 1}^{\sigma_{k, 1}} p_{k, 2}^{\sigma_{k, 2}} \dots
\end{multline}
where $\gamma_{A_{1}, \dots, A_{k}}$ is the $k$-sort permutation $\pbrac*{A_{1}, \dots, A_{k}} \to \pbrac*{A_{1}, \dots, A_{k}}$ induced by $\gamma$, $p_{i, j}$ are a two-parameter infinite family of indeterminates, and $\sigma_{i, j}$ is the number of $j$-cycles of $\sigma_{i}$.

As always, the $k$-sort $\Gamma$-cycle index is compatible with the appropriate operations $+$ and $\cdot$ on $k$-sort $\Gamma$-species.
In addition, it is compatible with a suitable notion of \enquote{sorted substitution} which involves specifying a species to substitute for each sort of labels.

\section{Virtual $\Gamma$-species}
\label{s:virtgspec}
The theory of virtual species (developed by Yeh in \cite{yeh:virtspec}) elegantly resolves several algebraic problems in the theory of combinatorial species; in particular, it allows for subtraction of arbitrary species and the computation of compositional inverses of many species.
The key idea is simply to complete the semiring of combinatorial species with respect to the operations of species sum and species product.
Specifically, taking any two combinatorial species $F$ and $G$, we define their \emph{difference} $F - G$ to be the equivalence class of all pairs of species $\pbrac{A, B}$ of combinatorial species satisfying $F + B = G + A$ by species isomorphism.

This definition satisfies many desirable properties; perhaps most importantly, if $H = F + G$, then $H - F = G$ as an isomorphism of virtual species, and $F - F = 0$ for any virtual species $F$.

To extend this notion to the context of $\Gamma$-species is merely a matter of definition.
First, we define the relation which forms the classes.

\begin{definition}
  \label{def:gspecequiv}
  Fix a group $\Gamma$ and let $F$, $G$, $H$, and $K$ be $\Gamma$-species.
  We write $\pbrac{F, G} \sim \pbrac{H, K}$ if $F + K = G + H$ as an isomorphism of $\Gamma$-species in the sense of \cref{def:gspecmap}.
\end{definition}

It is straightforward to show that this relation $\sim$ is an equivalence.
Thus, we can use it as the basis for a definition of virtual $\Gamma$-species.

\begin{definition}
  \label{def:virtgspec}
  Fix a group $\Gamma$ and let $\catname[\Gamma]{Spec}$ denote the category of $\Gamma$-species.
  Then a \emph{virtual $\Gamma$-species} is an element of $\quot{\catname[\Gamma]{Spec} \times \catname[\Gamma]{Spec}}{\sim}$, where $\sim$ is the equivalence relation defined in \cref{def:gspecequiv}.
  If $F$ and $G$ are $\Gamma$-species, their \emph{difference} is the virtual $\Gamma$-species $\pbrac{F, G}$, frequently denoted $F - G$.
\end{definition}

We note that the elementary species $0$ and $1$ each admit a single (trivial) $\Gamma$-action for any group $\Gamma$.
These are the additive and multiplicative identities of the ring of virtual $\Gamma$-species.

We also note that any virtual $\Gamma$-species $\Phi = F - G$ has a $\Gamma$-cycle index series given by $\gci{\Gamma}{\Phi} = \gci{\Gamma}{F} - \gci{\Gamma}{G}$.
The fact that $\sim$ is an equivalence relation implies that we may choose any representative pair $\pbrac{F, G}$ for $\Phi$ and obtain the same $\Gamma$-cycle index series, so this is well-defined.

The remainder of Yeh's theory of virtual species extends automatically to the $\Gamma$-species context, so we will not develop it explicitly here.

\section{A library of elementary $\Gamma$-species}
\label{sec:gspeclib}
To illustrate the use of $\Gamma$-species, we will now compute explicitly the $\Gamma$-cycle indices of several important examples.

\subsection{Trivial actions}
\label{sec:trivact}
Any (virtual) species $F$ may be equipped with a trivial action by any group $\Gamma$ (that is, an action where every element of $\Gamma$ acts as the identity map on $F$-structures).
The $\Gamma$-cycle index of the (virtual) $\Gamma$-species $F$ obtained in this way is given by
\begin{equation}
  \label{eq:trivact}
  \gcielt{\Gamma}{F}{\gamma} = \ci{F}.
\end{equation}

\subsection{Linear and cyclic orders with reversal}
\label{sec:ordrev}
Let $\speclin$ denote the species of linear orders and $\speccyc$ denote the species of cyclic orders.
Each of these admits a natural $\symgp{2}$-action which sends each ordering to its reversal, and so we also have associated $\symgp{2}$-species $\speclin$ and $\speccyc$.

\begin{theorem}
  \label{thm:speclin}
  The $\symgp{2}$-cycle index series of the species $\speclin$ of linear orderings with the order-reversing action is given by
  \begin{subequations}
    \label{eq:speclin}
    \begin{gather}
      \gcielt{\symgp{2}}{\speclin}{e} = \ci{\speclin} = \frac{1}{1 - p_{1}} = 1 + p_{1} + p_{1}^{2} + p_{1}^{3} + \dots \label{eq:specline} \\
      \gcielt{\symgp{2}}{\speclin}{\tau} = \sum_{k = 1}^{\infty} p_{2}^{k} + p_{2}^{k} p_{1} \label{eq:speclint}
    \end{gather}
  \end{subequations}
  where $e$ denotes the identity element of $\symgp{2}$ and $\tau$ denotes the non-identity element.
\end{theorem}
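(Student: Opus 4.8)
The plan is to treat the two elements of $\symgp{2}$ separately, with almost all the work falling on $\tau$. For the identity $e$, the component $\gcielt{\symgp{2}}{\speclin}{e}$ equals the ordinary cycle index $\ci{\speclin}$ by the remark following \cref{eq:gcidefpart}, and $\ci{\speclin} = \frac{1}{1-p_{1}}$ is the standard rigidity computation: a linear order has trivial automorphism group, so $\speclin \sbrac{\sigma}$ fixes a linear order only when $\sigma$ is the identity, making each degree-$n$ term $\frac{1}{n!} \cdot n! \cdot p_{1}^{n} = p_{1}^{n}$. This gives \cref{eq:specline}, and the content of the theorem is \cref{eq:speclint}.

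For $\tau$, I would work from the class-function form \cref{eq:gcidefpart}, so that it suffices to compute $\fix \pbrac{\tau_{\intset{n}} \cdot \speclin \sbrac{\sigma}}$ for one representative $\sigma$ of each cycle type $\lambda \vdash n$. Encoding a linear order on $\intset{n}$ as the word $(a_{1}, \dots, a_{n})$ listing its elements in increasing order, the operator $\tau_{\intset{n}} \cdot \speclin \sbrac{\sigma}$ first relabels by $\sigma$ and then reverses, sending $(a_{1}, \dots, a_{n})$ to $(\sigma(a_{n}), \dots, \sigma(a_{1}))$. Hence the word is fixed precisely when $a_{i} = \sigma(a_{n+1-i})$ for all $i$. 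Applying this identity twice yields $a_{i} = \sigma^{2}(a_{i})$, so only involutions $\sigma$ contribute. A short parity analysis then pins down the type: for even $n = 2m$ the pairing $(i, n+1-i)$ has no central fixed position, forcing $\sigma$ to be fixed-point-free (cycle type $2^{m}$); for odd $n = 2m+1$ the central position gives $a_{m+1} = \sigma(a_{m+1})$, forcing $\sigma$ to have exactly one fixed point (cycle type $2^{m}1$), which must occupy that central slot.

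The decisive step is the enumeration of fixed words. In both cases the relation $a_{n+1-i} = \sigma(a_{i})$ determines the second half of the word from the first, so a fixed word is exactly an ordered choice of one representative from each of the $m$ transpositions of $\sigma$: there are $2^{m}$ transversals and $m!$ orderings, giving $\fix = 2^{m} m!$. The point I expect to drive the whole computation is that this count is \emph{identical} to $z_{\lambda}$ for both relevant types, since $z_{2^{m}} = z_{2^{m}1} = 2^{m} m!$; consequently each contributing partition supplies a single term $\frac{p_{\lambda}}{z_{\lambda}} \fix = p_{\lambda}$, namely $p_{2}^{m}$ in the even case and $p_{2}^{m} p_{1}$ in the odd case, while every non-involutive cycle type contributes zero. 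Assembling the even contributions $p_{2}^{m}$ and the odd contributions $p_{2}^{m} p_{1}$ over $m$ produces the series \cref{eq:speclint}. The main obstacle is entirely bookkeeping: getting the fixed-point condition and the parity split exactly right so that the fixed-word count collapses cleanly onto $z_{\lambda}$; once that cancellation is secured, the series falls out with no further manipulation.
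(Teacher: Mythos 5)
Your proof is correct and follows essentially the same route as the paper's: both work cycle-type by cycle-type from \cref{eq:gcidefpart}, show that $\fix \pbrac{\tau \cdot \speclin \sbrac{\sigma}}$ vanishes unless $\lambda$ consists of $2$'s plus at most one $1$, count the $2^{m} m!$ fixed lists (the paper by ordering and orienting the $2$-cycles, you by transversal words---the same count), and exploit the cancellation against $z_{\lambda} = 2^{m} m!$. The only differences are cosmetic---you spell out the rigidity computation for the identity component, which the paper delegates to a citation to \cite{bll}, and your more careful fixed-point analysis (via $\sigma^{2} = \mathrm{id}$ and the central slot) even exposes that the argument actually produces the terms $1 + p_{1}$ for $n \le 1$, which the stated sum in \cref{eq:speclint} omits by starting at $k = 1$.
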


\begin{proof}
  \Cref{eq:specline} appears with proof as \cite[eq.~1.2.15]{bll}.

  In light of \cref{eq:gcidefpart}, to compute $\gcielt{\symgp{2}}{\speclin}{\tau}$, we need to compute for each $n \geq 0$ and each $\lambda \vdash n$ the number of linear orders which are fixed by the combined action of $\tau$ and a permutation of cycle type $\lambda$.
  Clearly this is $0$ unless $\lambda$ is composed entirely of $2$'s and possibly a single $1$, as illustrated in \cref{fig:speclint}.
  If it does have this form, a permutation $\sigma$ of cycle type $\lambda$ will act by reversing the order of the lists $L$ which are constructed by the following process:
  \begin{enumerate}
  \item
    Choose an ordering on the $\floor*{\sfrac{n}{2}}$ $2$-cycles of $\sigma$.

  \item
    For each $2$-cycle, choose one of the two possible orderings of the pair of elements in that cycle.

  \item
    If applicable, place the element in the $1$-cycle in the center.
  \end{enumerate}
  Thus, there are $2^{\floor*{\sfrac{n}{2}}} \cdot \floor*{\sfrac{n}{2}}!$ lists whose order is reversed by the action of this $\sigma$.
  But, for a partition $\lambda$ of this form, we also have that $z_{\lambda} = 2^{\floor*{\sfrac{n}{2}}} \cdot \floor*{\sfrac{n}{2}}!$, and so the contribution to the cycle index series term $\gcielt{\symgp{2}}{\speclin}{\tau}$ is simply $1 p_{\lambda}$.
  \Cref{eq:speclint} follows.
\end{proof}

\begin{figure}[ht]
  \centering
  \begin{tikzpicture}
    \node[style=cnode] (a) {$a$};
    \node[style=cnode, right=of a] (b) {$b$};
    \node[right=of b] (ldots) {$\dots$};
    \node[style=cnode, right=of ldots] (c) {$c$};
    \node[right=of c] (rdots) {$\dots$};
    \node[style=cnode, right=of rdots] (d) {$d$};
    \node[style=cnode, right=of d] (e) {$e$};

    \path[style=diredge]
    (a) edge (b)
    (b) edge (ldots)
    (ldots) edge (c)
    (c) edge (rdots)
    (rdots) edge (d)
    (d) edge (e);

    \path[<->, dashed, bend left]
    (a) edge (e)
    (b) edge (d);
  \end{tikzpicture}
  \caption{Schematic of a permutation (dashed arrows) which reverses a linear order (solid arrows)}
  \label{fig:speclint}
\end{figure}
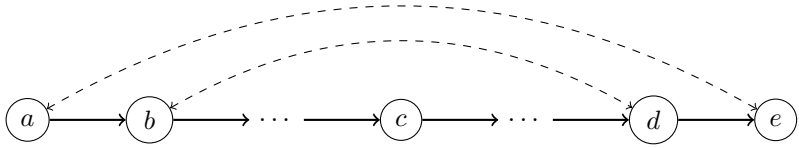

The $\symgp{2}$-cycle index $\gci{\symgp{2}}{\speclin}$ of $\speclin$ is available in Sage \cite{sage}:
\begin{description}
\item[module:]
  \code{sage.combinat.species.group\string_cycle\string_index\string_series\string_library}

\item[class:]
  \code{LinearOrderWithReversalGroupCycleIndex()}
\end{description}

\begin{theorem}
  \label{thm:speccyc}
  The $\symgp{2}$-cycle index series of the species $\speccyc$ of cyclic orderings with the order-reversing action is given by
  \begin{subequations}
    \label{eq:speccyc}
    \begin{gather}
      \gcielt{\symgp{2}}{\speccyc}{e} = \ci{\speccyc} = -\sum_{k = 1}^{\infty} \frac{\phi \pbrac{k}}{k} \ln \frac{1}{1 - p_{1}} \label{eq:speccyce} \\
      \gcielt{\symgp{2}}{\speccyc}{\tau} = \sum_{k = 1}^{\infty} \frac{1}{2} \pbrac*{p_{2}^{k} + p_{2}^{k - 1} p_{1}^{2}} + p_{2}^{k} p_{1} \label{eq:speccyct}
    \end{gather}
    where $\phi$ is the Euler $\phi$-function, $e$ is the identity element of $\symgp{2}$, and $\tau$ is the non-identity element of $\symgp{2}$.
  \end{subequations}
\end{theorem}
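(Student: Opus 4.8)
The plan is to treat the two identities separately. \Cref{eq:speccyce} is, by the remark following \cref{eq:gcidefpart}, nothing more than the ordinary cycle index $\ci{\speccyc}$ of the species of cyclic orders, which is classical (see~\cite{bll}); so the substance of the theorem lies in \cref{eq:speccyct}. For that I would follow the template of the proof of \cref{thm:speclin}: by \cref{eq:gcidefpart} it suffices, for each $n \geq 0$ and each $\lambda \vdash n$, to count the cyclic orders on $\intset{n}$ fixed by the joint action of $\tau$ and a permutation $\sigma$ of cycle type $\lambda$; since $\tau$ reverses, such a cyclic order is precisely one that $\sigma$ carries to its own reversal. Writing this count as $\fix(\tau,\sigma)$, the contribution of $\lambda$ to $\gcielt{\symgp{2}}{\speccyc}{\tau}$ is $\fix(\tau,\sigma)\,p_{\lambda}/z_{\lambda}$, and I would aim to show $\fix(\tau,\sigma)$ is nonzero for exactly three families of cycle types, with values making the quotients $\fix/z_{\lambda}$ reproduce the coefficients $\tfrac12,\tfrac12,1$ in \cref{eq:speccyct}.

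The key simplification I would use is the standard identification of a cyclic order on $\intset{n}$ with its successor map, an $n$-cycle $c \in \symgp{n}$; under it, reversal sends $c$ to $c^{-1}$ and relabeling by $\sigma$ sends $c$ to $\sigma c \sigma^{-1}$. The fixed-point condition thus becomes the clean group-theoretic equation $\sigma c \sigma^{-1} = c^{-1}$, so $\fix(\tau,\sigma)$ is the number of $n$-cycles $c$ satisfying it. Passing to index coordinates in which $c$ is the shift $i \mapsto i+1$, the equation says $\sigma$ conjugates the shift to its inverse; the permutations with this property form a single coset of the centralizer $\abrac{c}$ (the rotations), hence number exactly $n$, and are precisely the index-reflections $i \mapsto m-i$. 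In particular every contributing $\sigma$ is an involution, and counting the fixed indices of $i \mapsto m-i$ shows $\sigma$ has exactly one fixed point when $n$ is odd and either none or two when $n$ is even. This isolates the cycle types $\pbrac{2^{k},1}$, $\pbrac{2^{k}}$, and $\pbrac{2^{k-1},1^{2}}$ of \cref{eq:speccyct} and gives $\fix(\tau,\sigma)=0$ otherwise.

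To evaluate $\fix(\tau,\sigma)$ on each surviving type I would pin down the rotational ambiguity by fixing a canonical starting index and then build the labeling explicitly, placing each $2$-cycle of $\sigma$ on one of the mirror-pairs $\cbrac{i,m-i}$ in one of two orientations and each fixed point of $\sigma$ on a fixed index of the reflection. In the odd case $\pbrac{2^{k},1}$ I would anchor the unique fixed point at index $0$, leaving $k!$ assignments of the $k$ transpositions to the $k$ mirror-pairs and $2^{k}$ orientations with no residual rotation, so $\fix = 2^{k}k! = z_{\lambda}$ and the coefficient is $1$. In the two-fixed-point even case $\pbrac{2^{k-1},1^{2}}$ I would anchor one distinguished fixed point at index $0$ (forcing the other to the antipodal index $n/2$), leaving $(k-1)!$ assignments and $2^{k-1}$ orientations, so $\fix = 2^{k-1}(k-1)!$ against $z_{\lambda} = 2^{k}(k-1)!$, giving coefficient $\tfrac12$.

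The one delicate point, and the step I expect to be the main obstacle, is the fixed-point-free even case $\pbrac{2^{k}}$, where the reflection $i \mapsto m-i$ has $m$ odd and so offers no fixed index to anchor the rotation. The naive construction produces $2^{k}k!$ labelings, but rotating by $r$ shifts $m$ to $m+2r$, so the rotations preserving a fixed odd value of $m$ form a group of order $2$ (namely $r=0$ and $r=n/2$); hence each cyclic order is counted twice and $\fix = 2^{k-1}k!$. Dividing by $z_{\lambda}=2^{k}k!$ yields coefficient $\tfrac12$, matching \cref{eq:speccyct}. Correctly quotienting by rotations — cyclic orders being defined only up to rotation — is exactly where the bookkeeping is subtle, and I would guard the argument with explicit checks at $n=2,3,4$ before assembling the three families over all $n$ to obtain the stated series.
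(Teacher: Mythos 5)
Your proposal is correct and takes essentially the same approach as the paper: \cref{eq:speccyce} is quoted from \cite{bll}, and \cref{eq:speccyct} is established by the same per-cycle-type fixed-structure count used for \cref{eq:speclint}, isolating exactly the types $\pbrac{2^{k},1}$, $\pbrac{2^{k}}$, and $\pbrac{2^{k-1},1^{2}}$ and comparing the fixed counts against $z_{\lambda}$ to explain the coefficients $1$, $\sfrac{1}{2}$, $\sfrac{1}{2}$. Your reformulation via the successor $n$-cycle and the conjugation equation $\sigma c \sigma^{-1} = c^{-1}$, with the centralizer-coset count of reflections and the explicit rotation quotient in the fixed-point-free even case, rigorously supplies what the paper compresses into \enquote{counting arguments analogous to those in the previous proof}; note only that your odd family at $k = 0$ correctly produces a $p_{1}$ term (from $n = 1$, where $\tau$ acts trivially) which the stated sum, beginning at $k = 1$, omits.
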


\begin{proof}
  \Cref{eq:speccyce} appears with proof as \cite[eq.~1.4.18]{bll}.

  Proof of \cref{eq:speccyct} proceeds essentially identically to that of \cref{eq:speclint}.
  Once again, we note that the combined action of $\tau$ and a permutation of cycle type $\lambda \vdash n$ can only fix a cyclic order if $\lambda$ satisfies very strong constraints.
  If $n$ is odd, $\lambda$ must consist of $\floor{\sfrac{n}{2}}$ $2$'s and a single $1$, as illustrated in \cref{fig:speccyctodd}.

\begin{figure}[ht]
  \centering
  \begin{tikzpicture}
    \node[style=cnode] at (0:2) (a) {$a$};
    \node[style=cnode] at (72:2) (b) {$b$};
    \node[style=cnode] at (144:2) (c) {$c$};
    \node[style=cnode] at (216:2) (d) {$d$};
    \node[style=cnode] at (288:2) (e) {$e$};

    \path[style=diredge, bend right]
    (a) edge (b)
    (b) edge (c)
    (c) edge (d)
    (d) edge (e)
    (e) edge (a);

    \path[<->, dashed]
    (b) edge (e)
    (c) edge (d);
  \end{tikzpicture}
  \caption{Schematic of a permutation (dashed arrows) which reverses the direction of a cyclic order (solid arrows) of odd length}
  \label{fig:speccyctodd}
\end{figure}
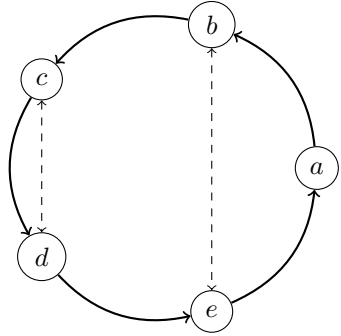

  If, on the other hand, if $n$ is even, $\lambda$ may consist either of $\sfrac{n}{2}$ $2$'s or $\pbrac*{\sfrac{n}{2} - 1}$ $2$'s and two $1$'s, as illustrated in \cref{fig:speccycteven}.

\begin{figure}[ht]
  \centering
  \begin{subfigure}[b]{.4\linewidth}
    \begin{tikzpicture}
      \node[style=cnode] at (0:2) (a) {$a$};
      \node[style=cnode] at (60:2) (b) {$b$};
      \node[style=cnode] at (120:2) (c) {$c$};
      \node[style=cnode] at (180:2) (d) {$d$};
      \node[style=cnode] at (240:2) (e) {$e$};
      \node[style=cnode] at (300:2) (f) {$f$};

      \path[style=diredge, bend right]
      (a) edge (b)
      (b) edge (c)
      (c) edge (d)
      (d) edge (e)
      (e) edge (f)
      (f) edge (a);

      \path[<->, dashed]
      (e) edge (f)
      (d) edge (a)
      (c) edge (b);
    \end{tikzpicture}
    \caption{All $2$'s}
    \label{fig:speccycteven2}
  \end{subfigure}
  \begin{subfigure}[b]{.4\linewidth}
    \begin{tikzpicture}
      \node[style=cnode] at (0:2) (a) {$a$};
      \node[style=cnode] at (60:2) (b) {$b$};
      \node[style=cnode] at (120:2) (c) {$c$};
      \node[style=cnode] at (180:2) (d) {$d$};
      \node[style=cnode] at (240:2) (e) {$e$};
      \node[style=cnode] at (300:2) (f) {$f$};

      \path[style=diredge, bend right]
      (a) edge (b)
      (b) edge (c)
      (c) edge (d)
      (d) edge (e)
      (e) edge (f)
      (f) edge (a);

      \path[<->, dashed]
      (b) edge (f)
      (c) edge (e);
    \end{tikzpicture}
    \caption{Two $1$'s}
    \label{fig:speccycteven1}
  \end{subfigure}
  \caption{Schematics of permutations (dashed arrows) which reverse the direction of a cyclic order (solid arrows) of even length}
  \label{fig:speccycteven}
\end{figure}
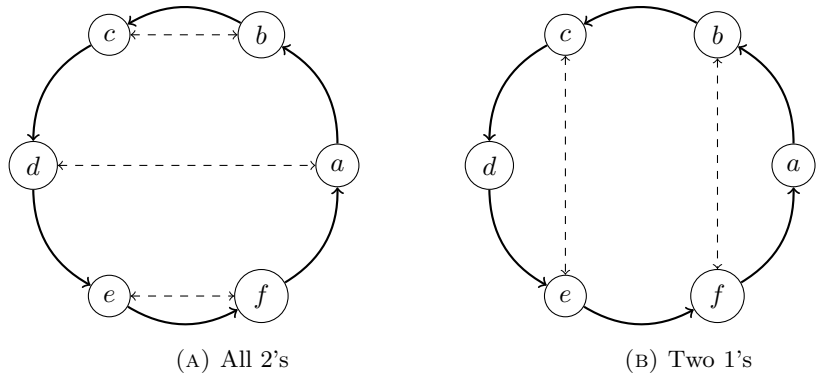

  Counting arguments analogous to those in the previous proof then yield the desired result by explaining the coefficients $\frac{1}{2}$ and $1$ in \cref{eq:speccyct}.
\end{proof}

The $\symgp{2}$-cycle index $\gci{\symgp{2}}{\speccyc}$ of $\speccyc$ is available in Sage \cite{sage}:

\begin{description}
\item[module:]
  \code{sage.combinat.species.group\string_cycle\string_index\string_series\string_library}

\item[class:]
   \code{CyclicOrderWithReversalGroupCycleIndex()}
\end{description}

\subsection{Linear $k$-orders with arbitrary interchange}
\label{s:ordint}
Fix $k \in \mathbf{N}$ and a permutation group $\Gamma \subgp \symgp{k}$.
The species $\speclin[k]$ of linear $k$-orders admits a natural action of $\Gamma$ which permutes the \enquote{slots} of each $\speclin[k]$-structure.
For example, $\speclin[k] \sbrac{\pbrac{1 3 2}} \pbrac{\sbrac{A, B, C}} = \sbrac{B, C, A}$.
Thus, $\speclin[k]$ is a $\Gamma$-species with respect to such an action of any $\Gamma \subgp \symgp{k}$.

\begin{theorem}
  \label{thm:ordintgci}
  If $\Gamma \subgp \symgp{k}$, the $\Gamma$-cycle index of the $\Gamma$-species $\speclin[k]$ of linear $k$-orders with interchange group $\Gamma$ is given by
  \begin{equation}
    \label{eq:ordintgci}
    \gcielt{\Gamma}{\speclin[k]}{\gamma} = p_{\gamma}
  \end{equation}
  where $p_{\gamma} = p_{1}^{\gamma_{1}} p_{2}^{\gamma_{2}} \dots$ for $\gamma_{i}$ the number of $i$-cycles in $\gamma$ as a permutation of $\sbrac{k}$.
\end{theorem}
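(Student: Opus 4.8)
The plan is to exploit the fact that the species $\speclin[k]$ is concentrated in a single degree: a linear $k$-order exists only on a label set of cardinality exactly $k$, so $\speclin[k]\sbrac{A}$ is empty unless $\abs{A} = k$. Consequently, in the defining sum \cref{eq:gcidefperm} (equivalently the partition form \cref{eq:gcidefpart}) only the $n = k$ term survives, and the whole computation reduces to analyzing the single finite action of $\symgp{k}$ together with $\gamma$ on the $k!$ linear arrangements of $\intset{k}$.

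First I would fix a concrete model, identifying a $\speclin[k]$-structure on $\intset{k}$ with a bijection $\ell \colon \intset{k} \to \intset{k}$ recording which label sits in each slot. Under this identification the relabeling transport $\speclin[k]\sbrac{\sigma}$ sends $\ell \mapsto \sigma \circ \ell$, while the interchange action of $\gamma$ permutes slots (as in the worked example $\speclin[k]\sbrac{\pbrac{132}}\pbrac{\sbrac{A,B,C}} = \sbrac{B,C,A}$) and so sends $\ell \mapsto \ell \circ \gamma^{-1}$. Following the convention fixed after \cref{eq:gcidefperm}, the combined operation $\gamma_{\intset{k}} \cdot \speclin[k]\sbrac{\sigma}$ relabels first and then interchanges, hence sends $\ell \mapsto \sigma \circ \ell \circ \gamma^{-1}$. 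Therefore $\ell$ is fixed precisely when $\sigma \circ \ell \circ \gamma^{-1} = \ell$, that is, when $\sigma = \ell \gamma \ell^{-1}$.

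The key step is then a standard conjugacy count. The structure $\ell$ is fixed if and only if it conjugates $\gamma$ to $\sigma$; such an $\ell$ exists iff $\sigma$ and $\gamma$ share a cycle type $\lambda$, in which case the set of solutions is a coset of the centralizer of $\gamma$ and so has cardinality $\abs{C_{\symgp{k}}\pbrac{\gamma}} = z_{\lambda}$. Thus $\fix\pbrac*{\gamma_{\intset{k}} \cdot \speclin[k]\sbrac{\sigma}}$ equals $z_{\lambda}$ when $\sigma$ has the same cycle type $\lambda$ as $\gamma$ and vanishes otherwise. Substituting this into \cref{eq:gcidefpart}, where the coefficient attached to $p_{\lambda}/z_{\lambda}$ is evaluated at a representative of cycle type $\lambda$, the single surviving term contributes $\pbrac*{z_{\lambda}/z_{\lambda}}\, p_{\lambda} = p_{\lambda} = p_{\gamma}$, giving $\gcielt{\Gamma}{\speclin[k]}{\gamma} = p_{\gamma}$ as claimed. (One obtains the same answer from the permutation form \cref{eq:gcidefperm} by instead counting the $k!/z_{\lambda}$ permutations $\sigma$ of cycle type $\lambda$, each contributing weight $z_{\lambda}$.)

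I expect the only delicate point to be the bookkeeping of the two commuting actions with consistent conventions — in particular getting the left/right placement of $\sigma$ and $\gamma^{-1}$ right so that ``fixed'' collapses to the clean conjugation condition $\sigma = \ell \gamma \ell^{-1}$. Once that is in place, the centralizer-order count and its cancellation against $z_{\lambda}$ are entirely routine, and the restriction $\Gamma \subgp \symgp{k}$ plays no role beyond guaranteeing that $\gamma$ is a genuine permutation of $\intset{k}$ whose cycle type determines $p_{\gamma}$.
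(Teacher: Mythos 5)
Your proof is correct and takes essentially the same approach as the paper: reduce to the single surviving degree $n = k$, observe via \cref{eq:gcidefpart} that the fixed-point count vanishes unless $\sigma$ and $\gamma$ share a cycle type $\lambda$ and otherwise equals $z_{\lambda}$, and cancel against the $1/z_{\lambda}$. The only difference is one of detail: where the paper asserts these counts as \enquote{clearly $0$} and \enquote{clearly $z_{\lambda}$}, your conjugation model $\sigma = \ell \gamma \ell^{-1}$ and the centralizer-order argument supply the justification the paper leaves implicit.
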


\begin{proof}
  Per \cref{eq:gcidefpart}, the coefficient of $p_{\lambda}$ in $\gcielt{\Gamma}{\speclin[k]}{\gamma}$ is equal to $1 / z_{\lambda}$ times the number of $\speclin[k]$-structures fixed by the combined action of $\gamma$ and a label permutation of cycle type $\lambda$.
  If $\gamma$ is not of cycle type $\lambda$, this is clearly $0$; otherwise, the number of linear $k$-orders which are fixed by the combined action of $\gamma$ and some permutation of cycle type $\lambda$ is clearly $z_{\lambda}$.
  \Cref{eq:ordintgci} follows.
\end{proof}

\section{Examples of $\Gamma$-species enumeration}
\label{sec:gspecenum}
\subsection{Graphs with complementation}
\label{sec:compgraph}
Let $\specgraph$ denote the species of simple graphs.
It is well-known (see \cite{bll}) that
\begin{equation}
  \label{eq:graphfunccomp}
  \specgraph = \speclin[2] \pbrac*{\specset} \funccomp \pbrac*{\specset[2] \cdot \specset}.
\end{equation}
The species $\specgraph$ admits a natural action of $\symgp{2}$ in which the nontrivial element $\tau$ sends each graph to its complement.
By construction, if we give $\specset$ the trivial $\symgp{2}$-action and $\speclin[2]$ the order-reversing action of \cref{sec:ordrev}, then \cref{eq:graphfunccomp} may be read as an isomorphism of $\symgp{2}$-species.

The quotient of $\specgraph$ under this action of $\symgp{2}$ is the species $\specgraphc = \quot{\specgraph}{\symgp{2}}$ of \enquote{complementation classes}---that is, of pairs of complementary graphs on the same vertex set.
Additionally, per \cref{thm:polyagspec}, $\gcieltvars{\specgraph}{\symgp{2}}{\tau}{x, x^{2}, x^{3}, \dots}$ is the ordinary generating function for unlabeled self-complementary graphs.
This analysis is conceptually equivalent to that given by Read \cite{read:scgraphs}, although of course the $\Gamma$-species approach may be written much more compactly.

Sage code to enumerate complementarity classes of graphs is available from the author on request.
It is necessary to implement the $\symgp{2}$-cycle index of $\specgraph$ manually, which results in code of considerable length.

\subsection{Digraphs with reversal}
\label{sec:digraph}
Let $\specdigraph$ denote the species of directed graphs.
In natural language, \enquote{a digraph is a subset of the set of ordered pairs of vertices}, so in the algebra of species we conclude that
\begin{equation}
  \label{eq:digraphfunccomp}
  \specdigraph = \specsubs \funccomp \pbrac*{\speclin[2] \cdot \specset}.
\end{equation}

The species $\specdigraph$ admits a natural action of $\symgp{2}$ in which the nontrivial element $\tau$ reverses the direction of all edges.
By construction, if we give $\specsubs$ and $\specset$ trivial $\symgp{2}$-actions and $\speclin[2]$ the order-reversing action of \cref{sec:ordrev}, then \cref{eq:digraphfunccomp} may be read as an isomorphism of $\symgp{2}$-species.

The quotient of $\specdigraph$ under this action of $\symgp{2}$ is the species $\specdigraphc = \quot{\specdigraph}{\symgp{2}}$ of \enquote{conversity classes} of digraphs---that is, of digraphs identified with their converses.
In light of \cref{eq:btspec}, we can compute the cycle index of $\specdigraphc$ using the Sage code appearing in \cref{list:condg}.
We note that the results in \cref{tab:condg} agree with those given in \cite[A054933]{oeis}, although in this case our method is much less computationally-efficient than others referenced there.

\begin{lstlisting}[float=htbp,caption=Sage code to compute numbers of conversity classes of digraphs, label=list:condg, language=Python, texcl=true, xleftmargin=3em]
from sage.combinat.species.group_cycle_index_series import GroupCycleIndexSeriesRing
from sage.combinat.species.library import SetSpecies, SubsetSpecies
from sage.combinat.species.group_cycle_index_series_library import LinearOrderWithReversalGroupCycleIndex

S2 = SymmetricGroup(2)
GCISR = GroupCycleIndexSeriesRing(S2)

P = GCISR(SubsetSpecies().cycle_index_series())
E = GCISR(SetSpecies().cycle_index_series())
L2 = LinearOrderWithReversalGroupCycleIndex().restricted(min=2,max=3)

D = P.functorial_composition(L2*E)

print D.quotient().isotype_generating_series().counts(6)
\end{lstlisting}

\begin{table}[htbp]
  \centering
  \begin{tabular}{l l}
    \toprule
    $n$ & $\specdigraphc[n]$ \\
    \midrule
    0 & 1 \\
    1 & 1 \\
    2 & 3 \\
    3 & 13 \\
    4 & 144 \\
    5 & 5158 \\
    6 & 778084 \\
    \bottomrule
  \end{tabular}
  \caption{Number $\specdigraphc[n]$ of isomorphism classes of conversity classes of digraphs with $n$ vertices}
  \label{tab:condg}
\end{table}

Again, per \cref{thm:polyagspec}, $\gcieltvars{\specdigraph}{\symgp{2}}{\tau}{x, x^{2}, x^{3}, \dots}$ is the ordinary generating function for unlabeled self-complementary digraphs.
This analysis is conceptually analogous to that given by Harary and Palmer \cite[\S 6.6]{hp:graphenum}, but, again, the $\Gamma$-species account is much more compact.

\subsection{Binary trees with reversal}
\label{sec:bintrees}
Let $\specbintree$ denote the species of binary rooted trees.
It is a classical result that
\begin{equation}
  \specbintree = 1 + \specsing + \specsing \cdot \speclin[2] \pbrac{\specbintree - 1}
  \label{eq:btspec}
\end{equation}
for $\specsing$ the species of singletons.

$\specbintree$ admits a natural $\symgp{2}$-action whose nontrivial element reflects each tree across the vertical axis, and we may treat it as a $\symgp{2}$-species with respect to this action.
Thus, \cref{eq:btspec} also holds as an isomorphism of $\symgp{2}$-species with $\specsing$ equipped with the trivial $\symgp{2}$-action and $\speclin[2]$ equipped with the order-reversing action from \cref{sec:ordrev}.

The quotient of $\specbintree$ under this action of $\symgp{2}$ is the species $\specbintreer = \quot{\specbintree}{\symgp{2}}$ of \enquote{reversal classes} of binary trees---that is, of binary trees identified with their reverses.
In light of \cref{eq:btspec}, we can compute the cycle index of $\specbintreer$ using the Sage code appearing in \cref{list:btr}.
We note that the results in \cref{tab:btr} agree with those given in \cite[A007595]{oeis}.

\begin{lstlisting}[float=htbp,caption=Sage code to compute numbers of reversal classes of binary trees, label=list:btr, language=Python, texcl=true, xleftmargin=3em]
from sage.combinat.species.group_cycle_index_series import GroupCycleIndexSeriesRing
from sage.combinat.species.library import SingletonSpecies
from sage.combinat.species.group_cycle_index_series_library import LinearOrderWithReversalGroupCycleIndex

S2 = SymmetricGroup(2)
GCISR = GroupCycleIndexSeriesRing(S2)

X = GCISR(SingletonSpecies().cycle_index_series())
L2 = LinearOrderWithReversalGroupCycleIndex().restricted(min=2,max=3)

BT = GCISR(0)
BT.define(1+X+X*L2(BT - 1))

print BT.quotient().isotype_generating_series().counts(20)
\end{lstlisting}

\begin{table}[htbp]
  \centering
  \begin{tabular}{l l}
    \toprule
    $n$ & $\specbintreer[n]$ \\
    \midrule
    0 & 1 \\
    1 & 1 \\
    2 & 1 \\
    3 & 3 \\
    4 & 7 \\
    5 & 22 \\
    6 & 66 \\
    7 & 217 \\
    8 & 715 \\
    9 & 2438 \\
    \bottomrule
  \end{tabular}
  \caption{Number $\specbintreer[n]$ of isomorphism classes of binary trees up to reversal with $n$ internal vertices}
  \label{tab:btr}
\end{table}

\subsection{$k$-ary trees with interchange}
Let $\specrtree[k]$ denote the species of $k$-ary rooted trees---that is, rooted trees where each node has $k$ linearly-ordered child trees.
Any $\Gamma \subgp \symgp{k}$ acts naturally on $\specrtree[k]$; an element $\gamma \in \Gamma$ acts on a tree $T$ by applying $\gamma$ to the linear order on each node's children.
Thus, $\specrtree[k]$ is a $\Gamma$-species with respect to this action, and it satisfies
\begin{equation}
  \label{eq:karytree}
  \specrtree[k] = 1 + \specsing + \specsing \cdot \speclin[k] \pbrac*{\specrtree[k]}
\end{equation}
for $\specsing$ the species of singletons with the trivial $\Gamma$-action and $\speclin[k]$ the $\Gamma$-species of linear $k$-orders from \cref{s:ordint}.
Its quotient is the species $\quot{\specrtree[k]}{\Gamma}$ of $\Gamma$-equivalence classes of $k$-ary trees.

\subsection{Paths and polygons}
\label{sec:pathpoly}
Recall from \cref{sec:ordrev} the $\symgp{2}$-species $\speccyc$ of cyclic orders and $\speclin$ of linear orders with reversal.
Their quotients are, respectively, the species $\specpoly = \quot{\speccyc}{\symgp{2}}$ of \enquote{necklaces} and $\specpath = \quot{\speclin}{\symgp{2}}$ of \enquote{paths}.
This species of polygons is also studied using similar methods in \cite[\S 3]{lab:smallcard}.

We can compute the cycle indices of $\specpath$ and $\specpoly$ using the Sage code appearing in \cref{list:pathpoly}.
Of course, there is only one $\specpath$-structure and one $\specpoly$-structure for each $n$, so we do not print the results here.

\begin{lstlisting}[float=htbp,caption=Sage code to compute numbers of paths and polygons, label=list:pathpoly, language=Python, texcl=true, xleftmargin=3em]
from sage.combinat.species.group_cycle_index_series_library import LinearOrderWithReversalGroupCycleIndex, CyclicOrderWithReversalGroupCycleIndex

L = LinearOrderWithReversalGroupCycleIndex()
Path = L.quotient()
print Path.generating_series().counts(8)
print Path.isotype_generating_series().counts(8)

C = CyclicOrderWithReversalGroupCycleIndex()
Poly = C.quotient()
print Poly.generating_series().counts(8)
print Poly.isotype_generating_series().counts(8)
\end{lstlisting}

\subsection{Bicolored graphs}
\label{sec:bcgraphs}
The species $\specgraphbc$ of properly $2$-colored graphs admits a structural $\symgp{2}$-action in which the nontrivial element interchanges colors.
In \cite{bpblocks}, Gessel and the author compute the $\symgp{2}$-cycle index of $\specgraphbc$ from first principles, then apply the results of \cref{sec:gspeciesalg} and some structural results to enumerate unlabeled bipartite blocks.

\subsection{$k$-trees}
\label{sec:ktrees}
In \cite{agdktrees}, the author enumerates $k$-trees using the theory of $\Gamma$-species.
To achieve this, we introduce the notion of an \emph{oriented} $k$-tree (which is a $k$-tree together with a choice of cyclic ordering of the vertices of each $\pbrac{k+1}$-clique, subject to a compatibility condition).
We then recast the problem as one of enumerating \emph{orbits} of oriented $k$-trees under a suitable action of $\symgp{k}$ and calculate the relevant $\symgp{k}$-cycle index series using recursive structure theorems.

In \cite{gdgktrees}, Gessel and the author simplify this approach, using colorings instead of cyclic orderings to break the symmetries of $k$-trees.
The results in that work are phrased in the language of generating functions, without explicit reference to species-theoretic cycle indices.

\subsection*{Acknowledgments}
The author is grateful to Ira Gessel for several helpful conversations, especially concerning the proof of \cref{thm:polyagspec}.

\end{document}